\documentclass[reqno,11pt]{amsart}
\usepackage{amssymb}
\usepackage{amsmath}
\usepackage[mathscr]{euscript}
\usepackage{graphicx}
\usepackage{comment}
\usepackage{enumitem}

\usepackage{hyperref}
\RequirePackage[dvipsnames]{xcolor}
\definecolor{halfgray}{gray}{0.55} 
\definecolor{webgreen}{rgb}{0,0.5,0}
\definecolor{webbrown}{rgb}{.6,0,0} 
\hypersetup{
  colorlinks=true, linktocpage=true, pdfstartpage=3,
  pdfstartview=FitV,
  breaklinks=true, pdfpagemode=UseNone, pageanchor=true,
  pdfpagemode=UseOutlines,
  plainpages=false, bookmarksnumbered, bookmarksopen=true,
  bookmarksopenlevel=1,
  hypertexnames=true,
  pdfhighlight=/O,
  urlcolor=webbrown, linkcolor=RoyalBlue,
  citecolor=webgreen,   pdftitle={},
  pdfauthor={},
  pdfsubject={2000 Mathematical Subject Classification: Primary:},%
  pdfkeywords={},
  pdfcreator={pdfLaTeX},
  pdfproducer={LaTeX with hyperref}
}

\usepackage[
  textwidth=15cm,
  textheight=22.7cm,
  centering]{geometry}

\numberwithin{equation}{section}
\theoremstyle{plain}
\newtheorem{theorem}{Theorem}[section]

\newtheorem{corollary}[theorem]{Corollary}
\newtheorem{proposition}[theorem]{Proposition}
\theoremstyle{definition}
\newtheorem{definition}[theorem]{Definition}

\newtheorem{remark}[theorem]{Remark}

\title[Shadowing and Topological Stability in Linear Dynamics]{Shadowing and Topological Stability via Generalized Pseudo-Hyperbolicity in Autonomous and Nonautonomous Linear Dynamics}

\author[D. Dragi\v cevi\'c]{Davor Dragi\v cevi\'c}
\address{Faculty of Mathematics, University of Rijeka, Radmile Matej\v ci\' c 2, 51000 Rijeka, Croatia}
\email{ddragicevic@math.uniri.hr}

\author[M. Pituk]{Mih\'aly Pituk}
\address{Department of Mathematics, University of Pannonia, Egyetem \'ut 10, 8200 Veszpr{\'e}m, Hungary;
HUN--REN--ELTE Numerical Analysis and Large Networks Research Group, Budapest, Hungary}
\email{pituk.mihaly@mik.uni-pannon.hu}

\subjclass[2020]{Primary: 37B65; Secondary: 37B55, 37C75, 47B39.}

\keywords{linear dynamical system; shadowing property; topological stability; generalized pseudo-hyperbolicity; surjective hyperbolicity; Lipschitz perturbation}

\begin{document}

\begin{abstract}
Although generalized hyperbolicity has proved to be a fundamental notion in linear dynamics, recent results show that it is too restrictive to characterize the shadowing property of invertible bounded linear operators on Banach spaces. This motivates the introduction of generalized pseudo-hyperbolicity, a weaker notion formulated through continuous homogeneous maps generating exponentially decaying Green families. We prove that, for arbitrary sequences of invertible bounded linear operators on Banach spaces, generalized pseudo-hyperbolicity is equivalent to both shadowing and topological stability. In the autonomous setting, this yields the equivalence between shadowing and topological stability, removing additional assumptions from previous results. We also prove robustness of shadowing and topological stability for sequences of invertible bounded linear operators.
\end{abstract}

\maketitle

\section{Introduction}\label{intro}

Generalized hyperbolicity has proved to be a fundamental and highly effective notion in linear dynamics, particularly in the study of shadowing and stability properties of invertible operators on Banach spaces (see, e.g., \cite{B,BCDHP,BCDMP,BM,BM2,BP,CGP,DM,DDM,MTS} and the references therein).
It is known to imply the shadowing property and to entail several strong forms of dynamical stability. However, recent results show that the converse fails in general: there exist invertible operators with the shadowing property which are not generalized hyperbolic; see~\cite{MTS,P}. Thus, despite its strength and usefulness, generalized hyperbolicity is too restrictive to characterize shadowing on arbitrary Banach spaces.

This naturally raises the question of whether there exists a weaker structural notion that captures the shadowing property exactly. A specific form of such a notion is implicit in the proofs of our main results in~\cite{DP}. There, we obtained a spectral characterization of shadowing for invertible bounded linear operators in terms of the surjective spectrum. More precisely, an invertible bounded linear operator~$T$ on a complex Banach space~$X$ has the shadowing property if and only if its surjective spectrum avoids the unit circle in the complex plane. We refer to this condition as \emph{surjective hyperbolicity}.
An important feature of the arguments                          
is that surjective hyperbolicity admits a dynamical formulation which does not rely directly on the spectrum. More precisely, the proofs show that the shadowing property of an invertible operator~$T$ is equivalent to the existence of a bi-infinite sequence of continuous homogeneous maps $C_k\colon X\to X$, $k\in\mathbb Z$, 
and constants $M>0$ and $t\in(0,1)$ such that
\begin{equation*}
C_1-TC_0=I\quad\text{and}\quad
C_{k+1}-TC_k=0\quad\text{whenever $k\ne0$, $k\in\mathbb Z$},
\end{equation*}
and
\begin{equation*}
\|C_k(x)\|\le Mt^{|k|}\|x\|,
\qquad x\in X,\quad k\in\mathbb Z.
\end{equation*}
For details, see Proposition~3.1 and relations~(4.18)--(4.20) in~\cite{DP}. 
Consequently,
\begin{equation*}
G_{m,n}:=C_{m-n},\qquad n,m\in\mathbb Z,
\end{equation*}
is an exponentially decaying Green kernel for the autonomous difference equation
\begin{equation*}
x_{n+1}=Tx_n,\qquad n\in\mathbb Z.
\end{equation*}
Thus, in the autonomous setting, shadowing is encoded by an exponentially decaying Green-kernel structure formed by continuous homogeneous maps.

This observation provides the main motivation for the notion introduced in the present paper. We consider the nonautonomous linear difference equation
\begin{equation*}
x_{n+1}=T_nx_n,\qquad n\in\mathbb Z,
\end{equation*}
where $(T_n)_{n\in\mathbb Z}$ is a sequence of invertible bounded linear operators on a Banach space. In this setting there is no single operator whose surjective spectrum can encode the dynamics, and the translation-invariant representation
$G_{m,n}=C_{m-n}$ is no longer available. The Green-kernel mechanism, however, continues to make sense. This suggests formulating the appropriate nonautonomous analogue in terms of an exponentially decaying two-parameter Green family of continuous homogeneous maps.
Motivated by this structure, we introduce the notion of \emph{generalized pseudo-hyperbolicity}. The use of continuous homogeneous maps, rather than bounded linear projections, is essential. Indeed, surjectivity of a bounded linear operator on a Banach space guarantees the existence of a continuous homogeneous right inverse, whereas a bounded linear right inverse need not exist.

A second objective of the paper is to clarify the relation between shadowing and topological stability. Topological stability of linear operators has been studied in~\cite{BCDHP,LMN}. In the autonomous setting, Bernardes, Caraballo, Darji, F\'avaro and Peris~\cite{BCDHP} proved that every topologically stable invertible operator on a Banach space has the shadowing property; see~\cite[Corollary~15]{BCDHP}. In the converse direction, Nguyen~\cite[Theorem~1.5]{N} proved
that shadowing implies topological stability under the additional
assumption of limit expansiveness. More recently, Messaoudi, Tofanin Neto, Saavedra and Tsokanos~\cite[Theorem~3.5]{MTS} established an equivalence between shadowing, topological stability and strong Lipschitz structural stability for invertible operators whose eigenspaces associated with unimodular eigenvalues admit closed complements.

In this paper, we introduce the natural nonautonomous notion of topological stability, and we prove that, for an arbitrary sequence of invertible bounded linear operators on a Banach space,
\begin{equation*}
\text{shadowing}
\,\Longleftrightarrow\,
\text{generalized pseudo-hyperbolicity}
\,\Longleftrightarrow\,
\text{topological stability}.
\end{equation*}
Consequently, for an arbitrary invertible bounded linear operator on a Banach space,
\begin{equation*}
\text{shadowing}
\,\Longleftrightarrow\,
\text{surjective hyperbolicity}
\,\Longleftrightarrow\,
\text{topological stability}.
\end{equation*}
Thus, our results not only extend the autonomous theory to general nonautonomous linear systems, but also strengthen the existing autonomous theorems by removing the limit expansiveness assumption in~\cite{N} and the complementability assumption in~\cite{MTS}.

Finally, we investigate robustness of shadowing and topological stability for sequences of invertible linear operators $(T_n)_{n\in\mathbb Z}$ on a Banach space.  
 We show that if the unperturbed linear system $(T_n)_{n\in\mathbb Z}$ has the shadowing property and the perturbations $f_n$, $n\in\mathbb Z$, are uniformly small Lipschitz maps, then the perturbed system $(F_n)_{n\in\mathbb Z}$, defined by $
F_n=T_n+f_n$ for $n\in\mathbb Z$,
inherits the shadowing property of the linear system. A similar result is proved for topological stability. 
Both robustness results are based on uniformly bounded solvability of the associated inhomogeneous difference equations, combined with an iterative correction scheme.

The main contributions of the paper can be summarized as follows. 
\vskip5pt
\begin{itemize}
\item The homogeneous Green-kernel structure underlying autonomous surjective hyperbolicity is extended to nonautonomous systems through generalized pseudo-hyperbolicity.
\vskip3pt
\item Generalized pseudo-hyperbolicity is shown to characterize both shadowing and topological stability.
\vskip3pt
\item Shadowing and topological stability are shown to be equivalent for invertible operators on arbitrary Banach spaces.
\vskip3pt
\item Both properties are shown to persist under sufficiently small nonlinear Lipschitz perturbations.	
\end{itemize}
\vskip5pt

The paper is organized as follows. Section~\ref{notation} collects the notation and definitions used throughout the paper. In Section~\ref{mainres}, we formulate the main results, while Section~\ref{proofs} contains their proofs.

\section{Notation and definitions}\label{notation}

Let $\mathbb Z$ and~$\mathbb N$ denote the set of integers and the set of positive integers, respectively. Set $\mathbb N_0:=\mathbb N\cup\{0\}$. 
If $Y_i=(Y_i, \|\cdot \|_i)$, $i=1, 2$, are complex Banach spaces, the symbol 
$H(Y_1, Y_2)$ will denote the space of (possibly nonlinear) maps $F\colon Y_1\to Y_2$ which are continuous and complex-homogeneous, i.e.,
 \begin{equation*}
F(\lambda x)=\lambda F(x)\qquad\text{ for all $x\in Y_1$ and $\lambda \in \mathbb C$}. 
\end{equation*}
It is known that $H(Y_1, Y_2)$ is a complex Banach space when equipped with the norm
\begin{equation*}
\|F\|:=\sup_{x\neq 0}\frac{\|F(x)\|_2}{\|x\|_1},\qquad F\in H(Y_1,Y_2).
\end{equation*}
The space $L(Y_1, Y_2)$ of all bounded linear operators from $Y_1$ to $Y_2$ is a closed subspace of $H(Y_1, Y_2)$. In the particular case when $Y_1=Y_2=Y$ we will write $H(Y)$ and $L(Y)$ instead of $H(Y,Y)$ and $L(Y,Y)$, respectively.

Throughout the paper, we shall assume that~$X$ is a nontrivial complex Banach space. The norms on both $X$ and $H(X)$ will be denoted by $\| \cdot \|$.
By $GL(X)$ we will denote the set of all $T\in L(X)$ which are invertible. Elements of $L(X)$ and~$GL(X)$ will be referred to as \emph{operators} and \emph{invertible operators} on~$X$, respectively. Finally, the symbol $Homeo(X)$ will denote the group of self-homeomorphisms of~$X$. Clearly, $GL(X)$ is a subgroup of $Homeo(X)$.

We first recall the standard notions of shadowing and topological stability.
\begin{definition}\label{autshad}
Let $F\in Homeo(X)$. We say that~$F$ has the \emph{shadowing property} if for every $\epsilon>0$ there exists $\delta>0$ such that every sequence $(x_n)_{n\in\mathbb Z}$ in~$X$ satisfying
\begin{equation*}
\sup_{n\in\mathbb Z}\|x_{n+1}-F(x_n)\|\le \delta
\end{equation*}
is $\epsilon$-shadowed by an exact trajectory of~$F$, i.e., there exists $v\in  X$ such that
\begin{equation*}
\sup_{n\in\mathbb Z}\|x_n-F^n(v)\|\le \epsilon.
\end{equation*}
\end{definition}
As usual, $F^0=I$, $F^n$ denotes the $n$-fold iterate of $F$ for $n>0$, and $F^n=(F^{-1})^{-n}$ for $n<0$. 

For an arbitrary map $F\colon X\to X$, write
\begin{equation*}
\|F\|_\infty:=\sup_{x\in X}\|F(x)\|;\qquad\|F\|_\infty\in[0,\infty].
\end{equation*}
\begin{definition}\label{auttopstab}
Let $F\in Homeo(X)$. We say that~$F$ is \emph{topologically stable} if for every $\epsilon>0$ there exists $\delta>0$ such that, whenever $g\in Homeo(X)$ satisfies
\begin{equation*}
\|F-g\|_\infty\le\delta,
\end{equation*}
there exists a continuous map $h\colon X\to X$ such that
\begin{equation}\label{topconj}
h\circ g=F\circ h
\end{equation}
and
\begin{equation}\label{topclose}
\|h-I\|_\infty\le\epsilon.
\end{equation}
\end{definition}

For $T\in L(X)$, let $\sigma_s(T)$ denote the \emph{surjective spectrum} of~$T$ defined by
\begin{equation}
\sigma_s(T):=\{\lambda\in\mathbb C:\lambda I-T\text{ is not surjective}\}.
\end{equation}
We shall call $T$ \emph{surjectively hyperbolic} if
\begin{equation}\label{surj_hyp}
\sigma_s(T)\cap\mathbb T=\varnothing,
\end{equation}
where $\mathbb T$ denotes the unit circle in the complex plane.
In \cite[Theorem~2.2]{DP}, we proved that, for an arbitrary invertible operator on a Banach space, shadowing is equivalent to surjective hyperbolicity. (For a criterion for Hilbert space operators, see \cite[Theorem~1.7]{PS}.) The argument also yields the continuous-homogeneous structure underlying the generalized pseudo-hyperbolicity property defined below. 

\begin{definition}\label{autgph}
Let $T\in GL(X)$. We say that $T$ is \emph{generalized pseudo-hyperbolic} if there exist $C\in H(X)$ and $D, \lambda>0$ such that for all $n\in \mathbb N_0$,
\begin{equation*}
\|T^n C\| \le De^{-\lambda n} \qquad \text{and} \qquad \|T^{-n}(I-C)\|\le De^{-\lambda n}.
\end{equation*}
\end{definition}
In the particular case where $C\in L(X)$, generalized pseudo-hyperbolicity reduces to pseudo-hyperbolicity, a notion introduced in~\cite{MTS}. While generalized pseudo-hyperbolicity is equivalent to both the shadowing property and topological stability for invertible operators on a Banach space (see Theorem~\ref{thm2}), pseudo-hyperbolicity is only sufficient but not necessary for these properties.

The above definitions serve as prototypes for the corresponding 
nonautonomous notions.
Given a sequence $(h_n)_{n\in\mathbb Z}$ in $Homeo(X)$,
define the associated two-parame\-ter \emph{evolution family} $h\colon\mathbb Z\times\mathbb Z\to Homeo(X)$ by
\begin{equation*}	
h(m,n):=\begin{cases}
h_{m-1}\circ\cdots\circ h_n,\quad &m>n,\\
I,\quad &m=n,\\
h_m^{-1}\circ\cdots\circ h_{n-1}^{-1},\quad &m<n,
\end{cases}
\end{equation*}
where $I$ is the identity map on~$X$.
Clearly, $h$ satisfies the \emph{evolution property}
\begin{equation*}
h(m,k)\circ h(k,n)=h(m,n)\qquad\text{whenever $m,k,n\in\mathbb Z$}.
\end{equation*}

\begin{definition}\label{nonautshad}
Let $(F_n)_{n\in\mathbb Z}$ be a sequence in~$Homeo(X)$. We say that $(F_n)_{n\in\mathbb Z}$ has the \emph{shadowing property} if for every $\epsilon>0$ there exists $\delta>0$ such that every sequence $(x_n)_{n\in\mathbb Z}$ in~$X$ satisfying
\begin{equation*}
\sup_{n\in\mathbb Z}\|x_{n+1}-F_n(x_n)\|\le \delta,
\end{equation*}
there exists $v\in X$ such that 
\begin{equation*}
\sup_{n\in\mathbb Z}\|x_n-F(n,0)(v)\|\le \epsilon,
\end{equation*}
where $F$ is the evolution family associated with $(F_n)_{n\in\mathbb Z}$.
\end{definition}

\begin{definition}\label{nonauttopstab}
Let $(F_n)_{n\in\mathbb Z}$ be a sequence in~$Homeo(X)$. We say that $(F_n)_{n\in\mathbb Z}$ is \emph{topologically stable} if for every $\epsilon>0$ there exists $\delta>0$ such that, whenever $(g_n)_{n\in\mathbb Z}$ is a sequence in $Homeo(X)$ satisfying
\begin{equation*}
\sup_{n\in\mathbb Z}\|F_n-g_n\|_\infty\le\delta,
\end{equation*}
there exists a sequence $(h_n)_{n\in\mathbb Z}$ of continuous self-maps of~$X$ such that
\begin{equation}\label{topconj_2}
h_{n+1}\circ g_n=F_n\circ h_n,\qquad n\in\mathbb Z,
\end{equation}
and
\begin{equation}\label{topclose_2}
\sup_{n\in\mathbb Z}\|h_n-I\|_\infty\le\epsilon.
\end{equation}
\end{definition}

Note that for a sequence $(T_n)_{n\in\mathbb Z}$ in $GL(X)\subset Homeo(X)$ 
 the two-parameter evolution family associated with $(T_n)_{n\in\mathbb Z}$ has the form
\begin{equation*}	
T(m,n):=\begin{cases}
T_{m-1}T_{m-2}\dots T_n,\quad &m>n,\\
I,\quad &m=n,\\
T_m^{-1}T_{m+1}^{-1}\dots T_{n-1}^{-1},\quad &m<n.
\end{cases}
\end{equation*}

The nonautonomous version of generalized pseudo-hyperbolicity is the following.

\begin{definition}\label{gphdef}
Let $(T_n)_{n\in \mathbb Z}$ be a sequence in $GL(X)$. We say that $(T_n)_{n\in \mathbb Z}$ is \emph{generalized pseudo-hyperbolic} if there exist a sequence $(C_n)_{n\in \mathbb Z}$ in $H(X)$ and constants $D, \lambda>0$ such that for all $m\ge n$,
\begin{equation}\label{909}
\|T(m, n)C_n\|\le De^{-\lambda (m-n)} \quad \text{and} \quad \|T(n, m)(I-C_m)\|\le De^{-\lambda (m-n)}.
\end{equation}
\end{definition}

\section{Main results}\label{mainres}

We now state the main results. Their proofs are presented in Section~\ref{proofs}. 

The following theorem shows that generalized pseudo-hyperbolicity characterizes both the shadowing property and topological stability in the general nonautonomous linear setting.

\begin{theorem}\label{thm1}
Let $(T_n)_{n\in\mathbb Z}$ be a sequence in $GL(X)$. Then the following statements are equivalent.
\vskip3pt
\begin{enumerate}[label={(\roman*)},itemindent=1em]
\item[\rm (i)] $(T_n)_{n\in \mathbb Z}$ has the shadowing property.
\vskip2pt
\item[\rm (ii)] $(T_n)_{n\in \mathbb Z}$ is generalized pseudo-hyperbolic.
\vskip2pt
\item[\rm (iii)] $(T_n)_{n\in \mathbb Z}$ is topologically stable. 
\end{enumerate}
\end{theorem}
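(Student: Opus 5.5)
I would prove the cycle (ii)$\Rightarrow$(iii)$\Rightarrow$(i)$\Rightarrow$(ii). The unifying tool is the homogeneous Green family
\begin{equation*}
G(m,k):=\begin{cases}T(m,k)C_k,& m\ge k,\\ -T(m,k)(I-C_k),& m<k,\end{cases}
\end{equation*}
which, by \eqref{909}, satisfies $\|G(m,k)\|\le De^{-\lambda|m-k|}$. A short computation shows that, for any bounded $(y_n)$, the sequence $z_m:=\sum_{k}G(m,k+1)y_k$ is bounded, with $\sup\|z_m\|\le D\frac{1+e^{-\lambda}}{1-e^{-\lambda}}\sup\|y_n\|$, and solves $z_{m+1}=T_mz_m+y_m$. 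The needed identities are $T_mT(m,k)C_k=T(m+1,k)C_k$ and the jump term $C_{m+1}+(I-C_{m+1})=I$ at $k=m$. Linearity of $T$ makes these identities valid even though $C_k$ is only homogeneous. The map $y\mapsto z$ is continuous and homogeneous, though not linear.

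\textbf{(ii)$\Rightarrow$(iii).} Write $g_n=T_n+\phi_n$ with $\|\phi_n\|_\infty\le\delta$. I seek $h_n=I+u_n$ with $u_n$ bounded and continuous, so that \eqref{topconj_2} reads
\begin{equation*}
u_{n+1}(g_n(x))=T_nu_n(x)-\phi_n(x).
\end{equation*}
Reparametrize along orbits of $g$: for $x\in X$ put $x_m:=g(m,n)(x)$ and set $y_m:=-\phi_m(x_m)$. Then define
\begin{equation*}
u_n(x):=-\sum_k G(n,k+1)\phi_k(g(k,n)x).
\end{equation*}
The series converges uniformly because of the exponential decay, so $u_n$ is continuous (each term is continuous, as $g(k,n)$ is a homeomorphism and $G$ is continuous). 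Its bound is $\|u_n\|_\infty\le K\delta$ with $K=D\frac{1+e^{-\lambda}}{1-e^{-\lambda}}$. The conjugacy equation follows from the evolution property $g(k,n+1)\circ g_n=g(k,n)$ together with the solution identity above. Taking $\delta=\epsilon/K$ finishes this step.

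\textbf{(iii)$\Rightarrow$(i).} I adapt the argument of \cite[Corollary~15]{BCDHP}. Given a $\delta$-pseudo-orbit $(x_n)$, I want to build homeomorphisms $g_n$ with $\|g_n-T_n\|_\infty\le c\delta$ and $g_n(x_n)=x_{n+1}$. Put $e_n:=x_{n+1}-T_nx_n$ and take
\begin{equation*}
g_n(x)=T_nx+\rho(\|x-x_n\|)e_n,
\end{equation*}
with a bump function $\rho$ satisfying $\rho(0)=1$. To ensure $g_n$ is a homeomorphism, I choose $\rho$ with Lipschitz constant small relative to $\|T_n^{-1}\|^{-1}/\delta$; then $g_n$ is $T_n$ plus a small Lipschitz perturbation, hence invertible. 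The constant $\|T_n^{-1}\|$ may be unbounded in $n$, so I would instead scale the bump radius $r_n$ with $n$: a radius $r_n\ge 2\delta\|T_n^{-1}\|$ works and costs nothing, since the perturbation size stays $\delta$ regardless of the radius.

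Topological stability then gives $h_n$ with $h_{n+1}g_n=T_nh_n$ and $\|h_n-I\|_\infty\le\epsilon$. Set $v:=h_0(x_0)$. Then $h_n(x_n)=T(n,0)v$ by induction in both directions, using invertibility of $T_n$ for negative $n$. Hence $\|x_n-T(n,0)v\|\le\epsilon$.

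\textbf{(i)$\Rightarrow$(ii), the main obstacle.} Linearity and scaling turn shadowing into the following statement: for every bounded $(y_n)$ there is a solution of $z_{n+1}=T_nz_n+y_n$ with $\sup\|z\|\le L\sup\|y\|$. Equivalently, the operator $\mathcal L\colon(z_n)\mapsto(z_{n+1}-T_nz_n)$ on $\ell^\infty(\mathbb Z,X)$ is surjective. By Bartle--Graves, $\mathcal L$ then has a continuous homogeneous right inverse $R$. I would define $C_n x:=(R(\delta^{n-1}x))_n$, where $\delta^{n-1}x$ denotes the sequence equal to $x$ at index $n-1$ and $0$ elsewhere. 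With this choice, $z:=R(\delta^{n-1}x)$ satisfies $z_{m+1}=T_mz_m$ for $m\ne n-1$ and $z_n=T_{n-1}z_{n-1}+x$. Consequently $T(m,n)C_nx=z_m$ for $m\ge n$ and $T(m,n)(I-C_n)x=-z_m$ for $m<n$, both bounded by $L\|x\|$.

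The hard part is upgrading this uniform boundedness to exponential decay. I would try the Perron-type trick: apply the boundedness estimate to inputs of the form $y_k=T(k,n)C_nx$ for $n\le k<n+N$, whose solution is $(k-n)T(k,n)C_nx$, modulo the nonlinearity of $R$. Nonlinearity blocks a direct superposition, so I would first replace $C_n$ by the solution selected from the kernel. The kernel of $\mathcal L$ consists of bounded full orbits; I would use the fact that bounded solutions are those $T(\cdot,n)w$, and show via the estimate that $\|T(n+N,n)C_nx\|\le\frac{L'}{N}\|x\|$. This yields a contraction by a factor $\tfrac12$ after a fixed number $N$ of steps, uniformly in $n$, and hence exponential decay. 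The same argument applies in backward time to $I-C_m$. If the superposition difficulty persists, I would cite the nonautonomous analogue of \cite[Proposition~3.1]{DP}.
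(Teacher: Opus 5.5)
Your (ii)$\Rightarrow$(iii) is the paper's Green-family construction. Your (iii)$\Rightarrow$(i) is correct, but the bump is unnecessary. Definition~\ref{nonauttopstab} only asks for $\sup_{x}\|g_n(x)-T_nx\|\le\delta$, so the translation $g_n(x)=T_nx+e_n$ is already a homeomorphism with $g_n(x_n)=x_{n+1}$. This is what the paper uses.

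The genuine gap is in (i)$\Rightarrow$(ii). From an arbitrary homogeneous right inverse $R$ you correctly get $\|T(m,n)C_n\|\le L$ for $m\ge n$ and $\|T(m,n)(I-C_n)\|\le L$ for $m<n$. The Perron-type upgrade to exponential decay is never carried out, and the obstruction is not only the nonlinearity of $R$. Your explicit solution and the one produced by $R$ differ by an element of $\ker\mathcal L$, i.e.\ a bounded full orbit. You have no $N$-independent control on its forward size, so $\|T(n+N,n)C_nx\|\le L'\|x\|/N$ does not follow.

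In fact, the $C_n$ built from an arbitrary $R$ need not decay at all. Take $X=\mathbb C$, with $T_n=2$ for $n<0$ and $T_n=\frac12$ for $n\ge0$. This sequence is generalized pseudo-hyperbolic, with $C_n^0=0$ for $n<0$ and $C_n^0=I$ for $n\ge0$, and $\ker\mathcal L$ is spanned by $(2^{-|m|})_{m\in\mathbb Z}$. Define $\phi(y):=y_k\bigl(1-2\sup_{j\ne k}|y_j|/|y_k|\bigr)$ if some index $k$ satisfies $|y_k|>2\sup_{j\ne k}|y_j|$, and $\phi(y):=0$ otherwise. Then $\phi$ is continuous and homogeneous, and $\phi(\delta^{n-1}x)=x$. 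Let $R_0y:=(\sum_kG(m,k+1)y_k)_m$ be your Green right inverse built from $C^0_n$, and set $R:=R_0+\Phi$ with $\Phi(y):=(2^{-|m|}\phi(y))_{m\in\mathbb Z}$. Your recipe then gives $T(0,n)C_nx=x$ for every $n<0$, so $\|T(0,n)C_n\|=1$ does not decay as $n\to-\infty$.

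So the missing idea is a way to make the selection with decay built in. The paper does this by exponential weighting. The operators $\mathcal L_{1/t,n}$ act as $tx_{m+1}-T_mx_m$ for $m\ge n$ and as $\frac1t x_{m+1}-T_mx_m$ for $m<n$. They satisfy $\|\mathcal L_{1/t,n}-\mathcal L\|\le \frac1t-1$ uniformly in $n$. Proposition~\ref{prop3} then yields homogeneous right inverses $\mathcal R_{1/t,n}$ with a bound $M$ independent of $n$. Setting $G_{m,n}(v):=t^{|m-n|}(\mathcal R_{1/t,n}(y(v)))_m$ gives a Green family with $\|G_{m,n}\|\le Mt^{|m-n|}$ by construction, and $C_n:=G_{n,n-1}$ works.

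Two further points. When $\sup_n\|T_n\|=\infty$, $\mathcal L$ is not bounded on $\ell_\infty(X)$, so Bartle--Graves must be applied on $D(\mathcal L)$ with the graph norm, as the paper does. And falling back on ``the nonautonomous analogue of \cite[Proposition~3.1]{DP}'' would be citing the very implication to be proved.
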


The following result is the autonomous version of Theorem~\ref{thm1}. 
\begin{theorem}\label{thm2}
Let $T\in GL(X)$. Then the following statements are equivalent.
\vskip3pt
\begin{enumerate}[label={(\roman*)},itemindent=1em]
\item[\rm (i)] $T$ has the shadowing property.
\vskip2pt
\item[\rm (ii)] $T$ is surjectively hyperbolic.
\vskip2pt
\item[\rm (iii)] $T$ is generalized pseudo-hyperbolic.
\vskip2pt
\item[\rm (iv)] $T$ is topologically stable.
\end{enumerate}
\end{theorem}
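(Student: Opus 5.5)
The plan is to derive Theorem~\ref{thm2} from Theorem~\ref{thm1} applied to the constant sequence $T_n:=T$, $n\in\mathbb Z$, together with \cite[Theorem~2.2]{DP} for the spectral condition.

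\textbf{Autonomous notions versus constant sequences.} For $T_n\equiv T$ the evolution family is $T(m,n)=T^{m-n}$. Hence Definition~\ref{nonautshad} with $v$ in place of $T^0v$ coincides word for word with Definition~\ref{autshad}. So (i) of Theorem~\ref{thm2} is equivalent to the shadowing property of the constant sequence. The equivalence (i)$\Leftrightarrow$(ii) is exactly \cite[Theorem~2.2]{DP}, which we take as given.

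\textbf{(i)$\Leftrightarrow$(iii).} If $T$ is generalized pseudo-hyperbolic with $C\in H(X)$, set $C_n:=C$ for all $n$. Then \eqref{909} reads $\|T^{m-n}C\|\le De^{-\lambda(m-n)}$ and $\|T^{-(m-n)}(I-C)\|\le De^{-\lambda(m-n)}$, which is Definition~\ref{autgph}. So the constant sequence is generalized pseudo-hyperbolic, and by Theorem~\ref{thm1} it has shadowing, which gives (i). For the converse, from (i) we obtain (ii), and the discussion in the introduction (Proposition~3.1 and (4.18)--(4.20) of \cite{DP}) provides $C_k\in H(X)$ with
\[
C_1-TC_0=I,\qquad C_{k+1}=TC_k \quad (k\ne0),\qquad \|C_k\|\le Mt^{|k|}.
\]
Take $C:=-C_0$. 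The recursion for $k\le -1$ gives $C_0=T^{n}C_{-n}$, so $\|T^nC\|\le Mt^n$ for $n\ge0$. For $k\ge1$ we get $C_{n+1}=T^{n}C_1$, hence $I-C=I+C_0=T^{-1}C_1$ and $T^{-n}(I-C)=T^{-n-1}C_1$. This term needs to be bounded, so the indexing has to be checked carefully. Alternatively, and more robustly, one can use Theorem~\ref{thm1}: the constant sequence has shadowing, hence is generalized pseudo-hyperbolic with some $(C_n)$, and these $C_n$ must then be made independent of $n$. I would prefer the explicit construction from \cite{DP} and adjust the sign and index convention (for instance $C:=T^{-1}C_1 - I$ or $C:=-C_0$) until both estimates hold with $\lambda=-\log t$.

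\textbf{(i)$\Leftrightarrow$(iv).} Given $g\in Homeo(X)$ with $\|T-g\|_\infty\le\delta$, apply Theorem~\ref{thm1}(iii) to $g_n:=g$ to obtain $h_n$ with $h_{n+1}\circ g=T\circ h_n$ and $\|h_n-I\|_\infty\le\epsilon$. The main obstacle is that we need a single $h$. If the proof of Theorem~\ref{thm1} constructs $h_n$ through shadowing of the orbit $(g^{k-n}(x))_k$, this can be made canonical: one sets $h_n(x)$ as a fixed function of the pseudo-orbit built by the homogeneous Green family. With constant $C_n$ this is shift-equivariant, so $h_n=h_0=:h$. If the proof is not available in that form, I would redo it autonomously: define
\[
h(x):=x+\sum_{k\in\mathbb Z}G_{0,k}\bigl(g(g^{k-1}x)-T g^{k-1}x\bigr),
\]
with the autonomous Green kernel. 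This is continuous because the series converges uniformly and each $G$ is continuous, it is $\epsilon$-close to $I$, and the conjugacy holds by reindexing $k$. Conversely, if $T$ is topologically stable, then the constant sequence is topologically stable, since the autonomous definition is a special case after composing by a fixed index; in any case \cite[Corollary~15]{BCDHP} gives shadowing directly. Either route closes the cycle of implications.
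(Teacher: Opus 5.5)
Your reduction to the constant sequence $T_n\equiv T$, your citations for (i)$\Leftrightarrow$(ii) and (iv)$\Rightarrow$(i), your proof of (iii)$\Rightarrow$(i) via $C_n:=C$, and your idea that a translation-invariant Green kernel makes the conjugacies independent of $n$ all match the paper. The gap is in (i)$\Rightarrow$(iii), which you never complete.

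Your explicit candidate $C:=-C_0$ does not work. The recursion $C_{k+1}=TC_k$ for $k\le-1$ gives $C_{-n}=T^{-n}C_0$. This bounds the \emph{backward} iterates of $C_0$, not $T^nC_0$ as you claim. In fact $TC_0=C_1-I$ gives $T^nC_0=T^{n-1}C_1-T^{n-1}$, which does not decay in general. Your other candidate fails for the same reason, since $T^n(T^{-1}C_1-I)=T^{n-1}C_1-T^n$.

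The right choice from the kernel of \cite{DP} is $C:=C_1$:
\begin{itemize}
\item $T^nC_1=C_{n+1}$ for $n\ge0$;
\item for $n\ge1$, since $I-C_1=-TC_0$, one has $T^{-n}(I-C_1)=-T^{-(n-1)}C_0=-C_{-(n-1)}$.
\end{itemize}
Hence both estimates hold with $\lambda=-\log t$ and $D=\max\{M/t,1+Mt\}$. This corresponds to the choice $C_n:=G_{n,n-1}$ in the proof of Theorem~\ref{thm1}.

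Your alternative through Theorem~\ref{thm1} is the paper's route, but the obstacle you raise does not exist: you do not need to make the $C_n$ independent of $n$. For the constant sequence, the first inequality in \eqref{909} with $n=0$ gives $\|T^kC_0\|\le De^{-\lambda k}$. The second inequality with $m=0$, $n=-k$ gives $\|T^{-k}(I-C_0)\|\le De^{-\lambda k}$ for all $k\ge0$. So $C:=C_0$ already satisfies Definition~\ref{autgph}; this one-line observation is the missing piece.

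There are two smaller points. First, write your explicit conjugacy as $h=I+H$ with the kernel normalized as in \eqref{greenident}. Then reindexing gives $H(gx)-TH(x)=(g-T)(g^{-1}x)$ instead of the required $(T-g)(x)$. The correct expression, to which \eqref{Hndef} reduces, is $h(x)=x+\sum_{k\in\mathbb Z}G_{0,k}\bigl((T-g)(g^kx)\bigr)$. Second, autonomous topological stability of $T$ does not trivially give topological stability of the constant sequence, because the latter must handle nonconstant $(g_n)$. So only your fallback to \cite[Corollary~15]{BCDHP}, which is what the paper uses, is valid for (iv)$\Rightarrow$(i).
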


\begin{remark}\label{rem1}
In \cite[Corollary~15]{BCDHP}, Bernardes et al. proved that topological stability implies the shadowing property for invertible operators on arbitrary Banach spaces.  For the converse implication, Nguyen~\cite[Theorem~1.5]{N} proved that every limit expansive
invertible linear operator with the shadowing property is topologically stable. More recently, Messaoudi et al.~\cite[Theorem~3.5]{MTS}
established the equivalence between topological stability and the shadowing property under the additional assumption that the eigenspaces $\ker(\lambda I-T)$, $\lambda\in\mathbb T$, are complemented in~$X$. Theorem~\ref{thm2} strengthens these results by showing that, for arbitrary invertible linear operators on complex Banach spaces, shadowing and topological stability are equivalent, without assuming either limit expansiveness or complementability of the unimodular eigenspaces.
\end{remark}

By a \emph{Lipschitz self-map of~$X$}, we mean a map 
 $f\colon X\to X$ such that
\begin{equation*}
\operatorname{Lip}(f):=\sup_{x\ne y}\frac{\|f(x)-f(y)\|}{\|x-y\|}<\infty.
\end{equation*}

In the following theorems, we show that both the shadowing property and topological stability of a linear system are robust under small Lipschitz perturbations.
 
\begin{theorem}\label{thm3}
Let $(T_n)_{n\in\mathbb Z}$ be a sequence in $GL(X)$ which has the shadowing property. Then there exists $\eta>0$ such that if $(f_n)_{n\in\mathbb Z}$ is a sequence of Lipschitz self-maps of~$X$ such that 
\begin{equation*}
\operatorname{Lip}(f_n)<\eta\qquad\text{for all $n\in\mathbb Z$}
\end{equation*}
and
\begin{equation*}
F_n:=T_n+f_n\in Homeo(X)\qquad\text{for all $n\in\mathbb Z$},
\end{equation*}
then the sequence $(F_n)_{n\in\mathbb Z}$ has the shadowing property.	
\end{theorem}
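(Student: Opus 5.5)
We use Theorem~\ref{thm1} to replace the shadowing hypothesis by generalized pseudo-hyperbolicity. The plan is then to solve the nonlinear pseudo-orbit correction problem by an iterative scheme, using a bounded homogeneous (nonlinear) right inverse of the linear difference operator.

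\textbf{Step 1: a bounded right inverse.} By Theorem~\ref{thm1}, $(T_n)_{n\in\mathbb Z}$ is generalized pseudo-hyperbolic. Fix $C_n\in H(X)$ and $D,\lambda>0$ satisfying \eqref{909}. For $y=(y_n)\in\ell^\infty(\mathbb Z,X)$ define
\begin{equation*}
(\mathcal G y)_n:=\sum_{k\le n}T(n,k)C_k(y_{k-1})-\sum_{k>n}T(n,k)(I-C_k)(y_{k-1}).
\end{equation*}
By \eqref{909} the two series converge absolutely, and $\|(\mathcal Gy)_n\|\le K\|y\|_\infty$ with $K:=D(1+e^{-\lambda})/(1-e^{-\lambda})$. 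A telescoping computation, using linearity of $T_n$ and the identity $C_{n+1}+(I-C_{n+1})=I$, gives
\begin{equation*}
(\mathcal Gy)_{n+1}-T_n(\mathcal Gy)_n=y_n .
\end{equation*}
The only nonlinear map involved is $C_k$, and it is applied directly to $y_{k-1}$. Hence $\mathcal G\colon\ell^\infty\to\ell^\infty$ is a (nonlinear) bounded right inverse of the operator $(Lz)_n:=z_{n+1}-T_nz_n$, satisfying $\|\mathcal Gy\|_\infty\le K\|y\|_\infty$. This is the "uniformly bounded solvability" of the inhomogeneous equation.

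\textbf{Step 2: reformulate shadowing.} Let $(x_n)$ satisfy $\|x_{n+1}-F_n(x_n)\|\le\delta$. We seek $z\in\ell^\infty$ with $\|z\|_\infty\le\epsilon$ such that $u_n:=x_n+z_n$ is an exact orbit, i.e.\ $u_{n+1}=F_n(u_n)$. Since each $F_n$ is a homeomorphism, the orbit is then $u_n=F(n,0)(u_0)$, and $v:=u_0$ gives the required shadowing. The orbit equation reads
\begin{equation*}
z_{n+1}-T_nz_n=\Phi(z)_n,\qquad \Phi(z)_n:=f_n(x_n+z_n)-f_n(x_n)-e_n,
\end{equation*}
where $e_n:=x_{n+1}-F_n(x_n)$. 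The map $\Phi$ sends $\ell^\infty$ into $\ell^\infty$, with $\|\Phi(z)-\Phi(w)\|_\infty\le\eta\|z-w\|_\infty$ and $\|\Phi(0)\|_\infty\le\delta$.

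\textbf{Step 3: iterative correction (main obstacle).} A naive fixed point $z=\mathcal G\Phi(z)$ fails, because $\mathcal G$ is only homogeneous, not Lipschitz: $\|\mathcal Gy-\mathcal Gy'\|$ need not be controlled by $\|y-y'\|$. To avoid this, we never apply $\mathcal G$ to differences; we only correct residuals. Set $z^0:=0$. Given $z^j$, let
\begin{equation*}
r^j:=\Phi(z^j)-Lz^j
\end{equation*}
be the current residual, and put $z^{j+1}:=z^j+\mathcal G r^j$. Because $L$ is linear and $L\mathcal G r^j=r^j$, we have $Lz^{j+1}=\Phi(z^j)$. Therefore
\begin{equation*}
r^{j+1}=\Phi(z^{j+1})-\Phi(z^j),
\end{equation*}
so that
\begin{equation*}
\|r^{j+1}\|_\infty\le\eta\|\mathcal Gr^j\|_\infty\le\eta K\|r^j\|_\infty .
\end{equation*}
Choose $\eta<1/(2K)$. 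Then $\|r^j\|_\infty\le 2^{-j}\delta$, and the sequence $(z^j)$ is Cauchy in $\ell^\infty$, with
\begin{equation*}
\|z^j\|_\infty\le K\sum_{i\ge0}\|r^i\|_\infty\le 2K\delta .
\end{equation*}

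\textbf{Step 4: pass to the limit.} Let $z:=\lim_j z^j$ in $\ell^\infty$. Since $L$ is continuous (because $\sup_n\|T_n\|$ is finite, as follows from \eqref{909}) and $\Phi$ is Lipschitz, both $Lz^j\to Lz$ and $\Phi(z^j)\to\Phi(z)$, so $r^j\to Lz-\Phi(z)$. As also $r^j\to0$, we get $Lz=\Phi(z)$, i.e.\ $u_n=x_n+z_n$ is an exact orbit. Taking $\delta:=\epsilon/(2K)$ yields $\|z\|_\infty\le\epsilon$. Note that $\eta$ depends only on $D$ and $\lambda$, as required by the statement.

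The point needing care is the bound $\sup_n\|T_n\|<\infty$. It follows from \eqref{909} with $m=n+1$: we have $\|T_nC_n\|\le D$, and the second inequality with $m=n+1$ gives $\|T_n^{-1}(I-C_{n+1})\|\le D$. To bound $T_n(I-C_n)$, write $T_n(I-C_n)=T(n+1,n-1)T_{n-1}^{-1}(I-C_n)$; if this route is awkward, one can avoid it altogether by writing the residual update directly as $\Phi(z^{j+1})-\Phi(z^j)$ without ever invoking continuity of $L$.
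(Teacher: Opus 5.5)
Your Steps 1--3 are sound. The operator $\mathcal G$ is precisely $y\mapsto\bigl(\sum_{m\in\mathbb Z}G_{n,m}(y_m)\bigr)_{n\in\mathbb Z}$ for the Green family \eqref{Gmndef}. Your residual-correction scheme is the same one the paper uses.

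The only structural difference is where bounded solvability comes from. The paper does not go through Theorem~\ref{thm1}. It uses Proposition~\ref{prop2} and the Open Mapping Theorem to pick, for each right-hand side $z$, some preimage $y\in\ell'_\infty(X)$ with $\|y\|'_\infty\le K\|z\|_\infty$; that choice need not be homogeneous or continuous. Your route gives an explicit solution operator with $K$ expressed through $D,\lambda$, as in the paper's proof of Theorem~\ref{thm4}. The cost is that you invoke the whole implication (i)$\Rightarrow$(ii).

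There is, however, a genuine error in Step 4: the claim that $\sup_n\|T_n\|<\infty$ follows from \eqref{909} is false. Take $X=\mathbb C$, $T_nz=(|n|+2)z$ and $C_n=0$. Then $\|T(n,m)\|\le 2^{-(m-n)}$ for $m\ge n$, so \eqref{909} holds with $D=1$, $\lambda=\log 2$, while $\|T_n\|\to\infty$. Your attempted derivation cannot succeed, since $T_n(I-C_n)$ (here equal to $T_n$) need not be bounded. So $L$ does not even map $\ell_\infty(X)$ into itself, and ``$Lz^j\to Lz$ by continuity of $L$'' is unjustified.

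Your fallback sentence misses the point. The identity $r^{j+1}=\Phi(z^{j+1})-\Phi(z^j)$ never used continuity of $L$; what needs an argument is the limit equation $Lz=\Phi(z)$. The repair is the closedness of $L$ (Proposition~\ref{prop1}):
\begin{itemize}
\item You already have the exact identity $Lz^{j+1}=\Phi(z^j)$. Fix $n$ and let $j\to\infty$.
\item Then $z^{j+1}_{n+1}-T_nz^{j+1}_n\to z_{n+1}-T_nz_n$ by continuity of the single operator $T_n$.
\item Also $\Phi(z^j)_n\to\Phi(z)_n$ because $\Phi$ is $\eta$-Lipschitz, so $(Lz)_n=\Phi(z)_n$ for all $n$.
\end{itemize}
With this change your proof is complete. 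This is exactly why the paper works in the graph-normed space $(\ell'_\infty(X),\|\cdot\|'_\infty)$ instead of assuming $\sup_n\|T_n\|<\infty$.
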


\begin{theorem}\label{thm4}
Let $(T_n)_{n\in\mathbb Z}$ be a sequence in $GL(X)$ which is topologically stable. Then there exists $\eta>0$ such that if $(f_n)_{n\in\mathbb Z}$ is a sequence of Lipschitz self-maps of~$X$ such that 
\begin{equation*}
\operatorname{Lip}(f_n)<\eta\qquad\text{for all $n\in\mathbb Z$}
\end{equation*}
and
\begin{equation*}
F_n:=T_n+f_n\in Homeo(X)\qquad\text{for all $n\in\mathbb Z$},
\end{equation*}
then the sequence $(F_n)_{n\in\mathbb Z}$ is topologically stable.	
\end{theorem}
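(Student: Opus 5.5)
The statement to prove is Theorem~\ref{thm4}. By Theorem~\ref{thm1}, topological stability of $(T_n)_{n\in\mathbb Z}$ is equivalent to generalized pseudo-hyperbolicity. So we fix $(C_n)$ in $H(X)$ and $D,\lambda>0$ as in~\eqref{909}. The plan is to show directly that $(F_n)$ is topologically stable, by building conjugacies $h_n=I+u_n$ from a bounded-solvability operator for the inhomogeneous equation.

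\textbf{Step 1 (linear solver).} For a bounded sequence $z=(z_n)$ in $X$, define
\[
(\mathcal G z)_n:=\sum_{k\le n}T(n,k)C_k z_{k-1}-\sum_{k>n}T(n,k)(I-C_k)z_{k-1}.
\]
By~\eqref{909} the series converge and $\sup_n\|(\mathcal G z)_n\|\le K\sup_n\|z_n\|$ with $K=D(1+e^{-\lambda})/(1-e^{-\lambda})$. A telescoping computation gives $(\mathcal G z)_{n+1}-T_n(\mathcal G z)_n=z_n$. Note that $\mathcal G$ is continuous and homogeneous but not linear, since each $C_k$ is merely continuous and homogeneous. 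I will apply $\mathcal G$ pointwise: for families $z_n\colon X\to X$ of continuous bounded maps, composed with suitable maps of $x$, $\mathcal G$ preserves continuity because the sums converge uniformly.

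\textbf{Step 2 (conjugacy equation).} Let $g_n$ be homeomorphisms with $\sup_n\|F_n-g_n\|_\infty\le\delta$. We need continuous bounded $u_n$ with $(I+u_{n+1})\circ g_n=F_n\circ(I+u_n)$. Writing $F_n=T_n+f_n$ and $g_n=F_n+e_n$, this becomes
\[
u_{n+1}\circ g_n-T_n u_n=f_n\circ(I+u_n)-f_n-e_n.
\]
Precompose with $g_n^{-1}$ and work in the Banach space $\mathcal B$ of bounded continuous sequences $(u_n)$ of maps $X\to X$. The difficulty is that the left side involves $u_{n+1}\circ g_n$ rather than a fixed linear operator on $X$. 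So I will instead use an iterative correction scheme along orbits. For each $x$, consider the $g$-orbit $y_m$ with $y_n=x$, $y_{m+1}=g_m(y_m)$, and seek $u_m(y_m)$ solving the linear inhomogeneous equation along that orbit. Set
\[
u_n(x):=\Big(\mathcal G\big(\,[f_m(y_m+u_m(y_m))-f_m(y_m)-e_m(y_m)]_m\big)\Big)_n.
\]
This is a fixed-point problem $u=\Phi(u)$ on $\mathcal B$. For $\eta K<1$, $\Phi$ is a contraction provided the $\mathcal G$-map is Lipschitz. That is the issue: a homogeneous continuous $C_k$ need not be Lipschitz.

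\textbf{Main obstacle.} Because $\mathcal G$ is only homogeneous, I would not run a Banach contraction. Instead I follow the paper's announced iterative correction. Start with $u^{(0)}=\mathcal G(-e)$ and define successive corrections $w^{(j+1)}=\mathcal G(r^{(j)})$, where $r^{(j)}$ is the new residual. Then $\|r^{(j+1)}\|\le\eta\|w^{(j+1)}\|\le\eta K\|r^{(j)}\|$, so the residuals decay geometrically when $\eta K<1$. The sum $u=\sum_j w^{(j)}$ then converges uniformly, giving continuous $u_n$ with $\|u_n\|_\infty\le K\delta/(1-\eta K)$. This bound is at most $\epsilon$ once $\delta$ is small, which yields~\eqref{topconj_2}--\eqref{topclose_2}.

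The delicate points to check are two. First, each correction must be continuous in $x$; this follows from uniform convergence and continuity of the $g$-orbits in $x$. Second, the residual estimate must use only the Lipschitz bound on $f_m$ and linearity of $T_n$, never linearity of $\mathcal G$. The choice $\eta:=1/(2K)$ is independent of $(g_n)$, as required.
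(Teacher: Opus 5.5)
Your proposal is correct and follows essentially the same route as the paper. Both arguments apply the Green-kernel solver along $g$-orbits; this is exactly the paper's $\mathcal R_g$, a bounded homogeneous right inverse of the linear operator $h\mapsto (h_{n+1}\circ g_n-T_nh_n)_n$. Both then use an iterative residual-correction scheme whose estimate $r^{(j+1)}=\mathcal N(u^{(j+1)})-\mathcal N(u^{(j)})$ relies only on the linearity of that operator, so linearity of the solver is never needed. The one step you leave implicit is routine and is spelled out in the paper: pass to the limit in the residual identity, using $r^{(j)}\to 0$, uniform convergence of $u^{(j)}$, and the Lipschitz bound on $\mathcal N$.
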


\begin{remark}\label{rem2}
Given $n\in\mathbb Z$, a sufficient condition for $F_n=T_n+f_n$ to be a homeomorphism in Theorems~\ref{thm3} and~\ref{thm4} is that
\begin{equation*}
\|T_n^{-1}\|\operatorname{Lip}(f_n)<1.
\end{equation*}
Indeed, since
\begin{equation*}
\operatorname{Lip}(T_n^{-1}f_n)
\le \|T_n^{-1}\|\operatorname{Lip}(f_n)<1, 
\end{equation*}
it follows that $I+T_n^{-1}f_n$ is a self-homeomorphism of~$X$, and hence so is $F_n=T_n\bigl(I+T_n^{-1}f_n\bigr)$.

\end{remark}

In view of Remark~\ref{rem2}, Theorems~\ref{thm3} and~\ref{thm4} have the following corollaries. 

\begin{corollary}\label{cor1}
Let $(T_n)_{n\in\mathbb Z}$ be a sequence in $GL(X)$ which has the shadowing property and
\begin{equation*}
\sup_{n\in\mathbb Z}\|T_n^{-1}\|<\infty.
\end{equation*}
 Then there exists $\eta>0$ such that if $(f_n)_{n\in\mathbb Z}$ is a sequence of Lipschitz self-maps of~$X$ such that 
\begin{equation*}
\operatorname{Lip}(f_n)<\eta\qquad\text{for all $n\in\mathbb Z$},
\end{equation*}
then 
\begin{equation*}
F_n:=T_n+f_n\in Homeo(X)\qquad\text{for all $n\in\mathbb Z$},
\end{equation*}
and  $(F_n)_{n\in\mathbb Z}$ has the shadowing property.		
\end{corollary}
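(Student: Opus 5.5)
The corollary follows from Theorem~\ref{thm3} together with Remark~\ref{rem2}. The only task is to choose $\eta$ so that the homeomorphism hypothesis of Theorem~\ref{thm3} holds automatically.

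Set $K:=\sup_{n\in\mathbb Z}\|T_n^{-1}\|$. Then $K\in(0,\infty)$, and $K>0$ because $X$ is nontrivial, so each $T_n^{-1}\neq0$. Let $\eta_0>0$ be the constant provided by Theorem~\ref{thm3} for the sequence $(T_n)_{n\in\mathbb Z}$, which has the shadowing property by assumption. Define
\begin{equation*}
\eta:=\min\{\eta_0,\,1/K\}.
\end{equation*}

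Now let $(f_n)_{n\in\mathbb Z}$ be Lipschitz self-maps of $X$ with $\operatorname{Lip}(f_n)<\eta$ for every $n$. For each $n$,
\begin{equation*}
\|T_n^{-1}\|\operatorname{Lip}(f_n)\le K\operatorname{Lip}(f_n)<K\eta\le 1.
\end{equation*}
By Remark~\ref{rem2}, this gives $F_n=T_n+f_n\in Homeo(X)$. For completeness, the reasoning behind the remark is as follows. The map $T_n^{-1}f_n$ is a strict contraction. Hence $I+T_n^{-1}f_n$ is injective, and it is surjective by the Banach fixed point theorem applied to $x\mapsto y-T_n^{-1}f_n(x)$. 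Its inverse is Lipschitz with constant $(1-\operatorname{Lip}(T_n^{-1}f_n))^{-1}$. Composing with $T_n$ then yields a homeomorphism.

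We also have $\operatorname{Lip}(f_n)<\eta\le\eta_0$ for every $n$. Therefore all hypotheses of Theorem~\ref{thm3} hold, and $(F_n)_{n\in\mathbb Z}$ has the shadowing property.

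There is no real obstacle here. The only point needing care is that $\eta$ must be uniform in $n$. This is exactly where the hypothesis $\sup_{n\in\mathbb Z}\|T_n^{-1}\|<\infty$ enters: it makes the single bound $\eta\le 1/K$ sufficient for Remark~\ref{rem2} at every index $n$ simultaneously.
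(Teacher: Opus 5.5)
Your proof is correct and follows the paper's argument. You take $\eta$ no larger than both the constant from Theorem~\ref{thm3} and $1/\sup_n\|T_n^{-1}\|$, invoke Remark~\ref{rem2} to get $F_n\in Homeo(X)$, and then apply Theorem~\ref{thm3}. Setting $\eta=\min\{\eta_0,1/K\}$ instead of the paper's strict inequality is harmless, because the strict bound $\operatorname{Lip}(f_n)<\eta$ together with $K>0$ already yields $\|T_n^{-1}\|\operatorname{Lip}(f_n)<1$.
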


\begin{corollary}\label{cor2}
Let $(T_n)_{n\in\mathbb Z}$ be a sequence in $GL(X)$ which is topologically stable and
\begin{equation*}
\sup_{n\in\mathbb Z}\|T_n^{-1}\|<\infty.
\end{equation*}
Then there exists $\eta>0$ such that if $(f_n)_{n\in\mathbb Z}$ is a sequence of Lipschitz self-maps of~$X$ such that 
\begin{equation*}
\operatorname{Lip}(f_n)<\eta\qquad\text{for all $n\in\mathbb Z$},
\end{equation*}
then 
\begin{equation*}
F_n:=T_n+f_n\in Homeo(X)\qquad\text{for all $n\in\mathbb Z$},
\end{equation*}
and $(F_n)_{n\in\mathbb Z}$ is topologically stable.	
\end{corollary}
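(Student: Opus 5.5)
Corollary~\ref{cor2} follows by combining Theorem~\ref{thm4} with the sufficient condition in Remark~\ref{rem2}. The only step is to shrink the constant from Theorem~\ref{thm4} so that the homeomorphism hypothesis of that theorem holds automatically.

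Set $K:=\sup_{n\in\mathbb Z}\|T_n^{-1}\|$. By assumption $K<\infty$. Since $X$ is nontrivial and each $T_n^{-1}$ is invertible, $\|T_n^{-1}\|>0$, so $K>0$. Let $\eta_0>0$ be the constant provided by Theorem~\ref{thm4} for the topologically stable sequence $(T_n)_{n\in\mathbb Z}$. Define
\begin{equation*}
\eta:=\min\Bigl\{\eta_0,\frac{1}{K}\Bigr\}>0 .
\end{equation*}

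Now let $(f_n)_{n\in\mathbb Z}$ be Lipschitz self-maps of~$X$ with $\operatorname{Lip}(f_n)<\eta$ for all $n\in\mathbb Z$. For every $n$ we get
\begin{equation*}
\|T_n^{-1}\|\operatorname{Lip}(f_n)<K\cdot\frac{1}{K}=1 .
\end{equation*}
Remark~\ref{rem2} therefore gives $F_n=T_n+f_n\in Homeo(X)$ for every $n\in\mathbb Z$. The argument behind that remark is that $I+T_n^{-1}f_n$ is a Lipschitz perturbation of the identity with constant less than $1$, hence a homeomorphism, and composing with $T_n\in GL(X)$ preserves this.

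Since also $\operatorname{Lip}(f_n)<\eta\le\eta_0$ for all $n$, every hypothesis of Theorem~\ref{thm4} is satisfied. Hence $(F_n)_{n\in\mathbb Z}$ is topologically stable.

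There is no real obstacle here; the only point needing care is that the bound on $\operatorname{Lip}(f_n)$ must be uniform in~$n$. This is exactly what the assumption $\sup_n\|T_n^{-1}\|<\infty$ provides. The same argument, with Theorem~\ref{thm3} in place of Theorem~\ref{thm4}, gives Corollary~\ref{cor1}.
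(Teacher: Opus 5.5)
Your proposal is correct and follows the paper's argument: shrink the constant from Theorem~\ref{thm4} to $\min\{\eta_0, 1/\sup_n\|T_n^{-1}\|\}$, use Remark~\ref{rem2} to get $F_n\in Homeo(X)$, and then apply Theorem~\ref{thm4}. The only difference is cosmetic: you take $\eta$ equal to this minimum, while the paper takes it strictly smaller. Both work, since $\operatorname{Lip}(f_n)<\eta$ is already strict.
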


\begin{remark}
The assumption about the uniform boundedness of the inverse operators $T_n^{-1}$, $n\in\mathbb Z$, in Corollaries~\ref{cor1} and~\ref{cor2}, in general,  cannot be omitted. To see this, let
$X=\mathbb C$ and define
\begin{equation*}
T_n (z)=\frac{z}{|n|+2},\qquad z\in\mathbb C,\quad n\in\mathbb Z.
\end{equation*}
It is easily seen that $T_n\in GL(\mathbb C)$ for all $n\in\mathbb Z$ and 
\begin{equation*}
T_n^{-1}(z)=(|n|+2)z,\qquad z\in\mathbb C,\quad n\in\mathbb Z.
\end{equation*}
Since
\begin{equation*}
\|T_j\|=\frac{1}{|j|+2}\leq\frac{1}{2},\qquad j\in\mathbb Z,
\end{equation*}
it follows that
\begin{equation*}
\|T(m,n)\|\leq\prod_{j=n}^{m-1}\frac{1}{|j|+2}\le 2^{-(m-n)},\qquad m\ge n,\quad m,n\in\mathbb Z.
\end{equation*}
This implies that the sequence $(T_n)_{n\in\mathbb Z}$ is generalized pseudo-hyperbolic with $D=1$, $\lambda=\log 2$ and 
$C_n=I$ for all $n\in\mathbb Z$. By Theorem~\ref{thm1}, $(T_n)_{n\in\mathbb Z}$ has the shadowing property and is topologically stable.  Given any $\eta>0$, choose $N\in\mathbb Z$ such that
\begin{equation*}
\|T_N\|=\frac{1}{|N|+2}<\eta,
\end{equation*}
and define
\begin{equation*}
f_N=-T_N\qquad\text{and}\qquad f_n=0\quad\text{whenever $n\ne N$ and $n\in\mathbb Z$}.
\end{equation*}
Clearly,
\begin{equation*}
\operatorname{Lip}(f_n)<\eta
\qquad\text{for all }n\in\mathbb Z,
\end{equation*}
whereas $F_N=T_N+f_N=0$. Thus, $F_N\notin Homeo(\mathbb C)$. Of course, in this example, we have
\begin{equation*}
\sup_{n\in\mathbb Z}\|T_n^{-1}\|=\infty.
\end{equation*}
\end{remark}

Finally, we present the analogues of the robustness theorems in the autonomous case.

\begin{theorem}\label{thm5}
Let $T\in GL(X)$ with the shadowing property. Then there exists $\eta>0$ such that if $f$ is a Lipschitz self-map of~$X$ with
\begin{equation*}
\operatorname{Lip}(f)<\eta,\end{equation*}
then
\begin{equation*}
F:=T+f\in Homeo(X),
\end{equation*}
and $F$ has the shadowing property.	
\end{theorem}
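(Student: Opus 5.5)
Theorem~\ref{thm5} should follow from the nonautonomous results by specializing to constant sequences. Set $T_n:=T$ for all $n\in\mathbb Z$. With this choice the evolution family is $T(m,n)=T^{m-n}$. Hence the nonautonomous shadowing property of Definition~\ref{nonautshad} for $(T_n)$ coincides with the autonomous shadowing property of Definition~\ref{autshad} for $T$: an orbit $T(n,0)v=T^nv$ is exactly an autonomous orbit. So $(T_n)_{n\in\mathbb Z}$ has the shadowing property. Moreover $\sup_n\|T_n^{-1}\|=\|T^{-1}\|<\infty$, so Corollary~\ref{cor1} applies and yields some $\eta_0>0$.

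We then take $\eta:=\eta_0$; to be safe one may also shrink it to $\min\{\eta_0,\,1/(2\|T^{-1}\|)\}$. Let $f$ be Lipschitz with $\operatorname{Lip}(f)<\eta$, and put $f_n:=f$ for every $n$. By Remark~\ref{rem2}, $\|T^{-1}\|\operatorname{Lip}(f)<1$, so $F=T+f\in Homeo(X)$. This also follows directly from the conclusion of Corollary~\ref{cor1}.

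Corollary~\ref{cor1} then says that the constant sequence $F_n:=F$ has the nonautonomous shadowing property. It remains to translate this back to the autonomous setting. For the constant sequence $(F_n)$ the associated evolution family is $F(n,0)=F^n$ for all $n\in\mathbb Z$. For $n>0$ this is clear. For $n<0$ we check the definition: $F(n,0)=F_n^{-1}\circ\cdots\circ F_{-1}^{-1}=(F^{-1})^{-n}=F^n$, which matches the convention $F^n=(F^{-1})^{-n}$. Similarly, the pseudo-orbit condition $\sup_n\|x_{n+1}-F_n(x_n)\|\le\delta$ is literally $\sup_n\|x_{n+1}-F(x_n)\|\le\delta$. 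Definitions~\ref{nonautshad} and~\ref{autshad} therefore coincide, and $F$ has the shadowing property.

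There is no genuine obstacle in this argument. The only point requiring care is the bookkeeping for negative times in the evolution family, which is routine.
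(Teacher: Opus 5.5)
Your proof is correct and is essentially the paper's argument. The paper applies Theorem~\ref{thm3} to the constant sequence $T_n\equiv T$, shrinks $\eta$ below $1/\|T^{-1}\|$ so that Remark~\ref{rem2} gives $F\in Homeo(X)$, and then identifies the shadowing property of the constant sequence $F_n\equiv F$ with that of $F$. Routing through Corollary~\ref{cor1}, which is exactly Theorem~\ref{thm3} plus Remark~\ref{rem2} under $\sup_n\|T_n^{-1}\|<\infty$, is the same argument packaged differently, and your explicit check that $F(n,0)=F^n$ for $n<0$ spells out a step the paper only asserts.
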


\begin{theorem}\label{thm6}
Let $T\in GL(X)$ be topologically stable. Then there exists $\eta>0$ such that if $f$ is a Lipschitz self-map of~$X$ with
\begin{equation*}
\operatorname{Lip}(f)<\eta,\end{equation*}
then
\begin{equation*}
F:=T+f\in Homeo(X),
\end{equation*}
and $F$ is topologically stable.	
\end{theorem}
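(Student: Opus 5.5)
The plan is to reduce Theorem~\ref{thm6} to the nonautonomous results by working with the constant sequence $T_n:=T$. The one point needing care is that the conjugacies produced for a constant system must themselves be constant in $n$.

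\emph{Reduction.} Since $T$ is topologically stable, Theorem~\ref{thm2} shows that $T$ is generalized pseudo-hyperbolic in the sense of Definition~\ref{autgph}. So there are $C\in H(X)$ and $D,\lambda>0$ with $\|T^nC\|,\|T^{-n}(I-C)\|\le De^{-\lambda n}$ for $n\in\mathbb N_0$. Put $T_n:=T$ and $C_n:=C$ for all $n$. Then $T(m,n)=T^{m-n}$, so \eqref{909} holds and $(T_n)_{n\in\mathbb Z}$ is generalized pseudo-hyperbolic in the sense of Definition~\ref{gphdef}. By Theorem~\ref{thm1}, this constant sequence is topologically stable. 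Clearly $\sup_n\|T_n^{-1}\|=\|T^{-1}\|<\infty$, so Corollary~\ref{cor2} applies and gives some $\eta>0$. I shrink it so that also $\eta\|T^{-1}\|<1$. Then, by Remark~\ref{rem2}, $F=T+f\in Homeo(X)$ whenever $\operatorname{Lip}(f)<\eta$, and the constant sequence $F_n:=F$ is topologically stable in the sense of Definition~\ref{nonauttopstab}.

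\emph{From constant sequences back to a single map.} Fix $\epsilon>0$, let $\delta$ be as in Definition~\ref{nonauttopstab} for $(F_n)$, and let $g\in Homeo(X)$ with $\|F-g\|_\infty\le\delta$. Applying the definition with $g_n:=g$ gives continuous maps $h_n$ with $h_{n+1}\circ g=F\circ h_n$ and $\sup_n\|h_n-I\|_\infty\le\epsilon$. To obtain \eqref{topconj} I need $h_{n+1}=h_n$. The abstract definition does not guarantee this: shifting a solution gives another solution, but need not give the same one. This is the main obstacle.

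\emph{Resolving the obstacle.} I would not use Corollary~\ref{cor2} as a black box. Instead I would inspect the constructive proof behind Theorem~\ref{thm4}, including the implication (ii)$\Rightarrow$(iii) of Theorem~\ref{thm1}, and check that it is translation-equivariant. Writing $h_n=I+u_n$ with $u_n$ bounded and continuous, the conjugacy equation becomes
\[
u_{n+1}\circ g-T\circ u_n=f\circ(I+u_n)+(F-g).
\]
This is an inhomogeneous equation that the proof solves with the Green family built from $(C_n)$, combined with an iterative correction scheme for the Lipschitz term. Here the Green family is $G_{m,k}=T^{m-k}C$ for $m\ge k$ and $-T^{m-k}(I-C)$ for $m<k$, which depends only on $m-k$. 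All the data $g$, $f$, $F-g$ are independent of $n$. Starting the iteration from $u_n^{(0)}=0$, induction on the iteration step shows that each $u^{(j)}_n$ is independent of $n$. Indeed, the Green operator sends an $n$-independent right-hand side $\varphi$ to $u_n=\sum_k G_{n,k}(\varphi\circ\cdots)$, and the sum is invariant under $n\mapsto n+1$ after reindexing $k$. Passing to the limit, the fixed point $u_n\equiv u$ is also independent of $n$. Thus $h:=I+u$ is continuous, satisfies $h\circ g=F\circ h$, and $\|h-I\|_\infty\le\epsilon$, so $F$ is topologically stable.

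If the scheme in the proof of Theorem~\ref{thm4} turns out not to be manifestly shift-invariant, I would redo it directly in the space of single bounded continuous maps $u$. This uses the same contraction estimates, with $\sum_{k\in\mathbb Z}$ over the Green kernel $C_{k}$ from the introduction replacing the two-parameter family.
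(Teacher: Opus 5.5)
Your proposal is correct and follows essentially the same route as the paper: pass to the constant sequence whose Green family, built from the single $C$, satisfies $G_{m,n}=G_{m-n,0}$; run the iterative scheme from the proof of Theorem~\ref{thm4} starting at $h^{(0)}=0$; show by induction that every iterate, and hence the limit, is independent of $n$. Two bookkeeping points need fixing. First, the cohomological equation should read $u_{n+1}\circ g-T\circ u_n=(F-g)+f\circ(I+u_n)-f$; the $-f$ is needed for the right-hand side to lie in $\ell_\infty(C_b(X))$. Second, $\eta$ and $\delta$ must come from that construction with $K_0=D\frac{1+e^{-\lambda}}{1-e^{-\lambda}}$ for the constant-$C$ Green family, namely $\eta<\min\{1/K_0,1/\|T^{-1}\|\}$ and $\delta=\frac{1-\eta K_0}{K_0}\epsilon$ as in the paper, not from Corollary~\ref{cor2} or Definition~\ref{nonauttopstab} used as black boxes.
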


\section{Proof of the main results}\label{proofs}

As a preparation for the proof of the main theorems, we establish some preliminary results.

 Let $\ell_\infty(X)=\ell_\infty(\mathbb Z, X)$ denote the Banach space of all bounded sequences $x=(x_n)_{n\in\mathbb Z}$ in~$X$, equipped with the norm
\begin{equation*}
\|x\|_\infty:=\sup_{n\in\mathbb Z}\|x_n\|,\qquad x\in\ell_\infty(X).
\end{equation*}
Given a sequence $(T_n)_{n\in\mathbb Z}$ in $GL(X)$, consider the difference operator 
\begin{equation}\label{Ldef}
(\mathcal Lx)_n:=x_{n+1}-T_n x_n,\qquad n\in\mathbb Z.
\end{equation}
Since we do not assume the uniform boundedness of operators $T_n$, $n\in\mathbb Z$, $\mathcal L$ need not act boundedly on all of $\ell_\infty(X)$. We shall therefore consider~$\mathcal L$ on its natural domain (see~\cite[p.~230]{H})
\begin{equation*}
D(\mathcal L)=\ell'_\infty(X):=\{\,x\in\ell_\infty(X):\mathcal Lx\in\ell_\infty(X)\,\}\subset\ell_\infty(X).
\end{equation*}
\begin{proposition}\label{prop1}
$\mathcal L\colon D(\mathcal L)\subset\ell_\infty(X)\to\ell_\infty(X)$ is a closed linear operator.
\end{proposition}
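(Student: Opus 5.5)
Linearity of $\mathcal L$ and of its domain is immediate: if $x,y\in D(\mathcal L)$ and $\alpha,\beta\in\mathbb C$, then $\mathcal L(\alpha x+\beta y)=\alpha\mathcal Lx+\beta\mathcal Ly$ coordinatewise. This is a linear combination of bounded sequences, hence lies in $\ell_\infty(X)$. So $D(\mathcal L)$ is a linear subspace of $\ell_\infty(X)$ and $\mathcal L$ is linear on it. The substance is closedness, which I will verify directly from the sequential definition of the graph.

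Let $x^{(k)}\in D(\mathcal L)$ be such that $x^{(k)}\to x$ in $\ell_\infty(X)$ and $\mathcal Lx^{(k)}\to y$ in $\ell_\infty(X)$. I must show that $x\in D(\mathcal L)$ and $\mathcal Lx=y$. The key observation is that convergence in $\|\cdot\|_\infty$ implies convergence of each coordinate. Fix $n\in\mathbb Z$. Then $x^{(k)}_n\to x_n$ and $x^{(k)}_{n+1}\to x_{n+1}$ in $X$. Since $T_n$ is a single bounded operator, $T_nx^{(k)}_n\to T_nx_n$, although the norms $\|T_n\|$ are not uniformly bounded in $n$. Therefore
\[
(\mathcal Lx^{(k)})_n=x^{(k)}_{n+1}-T_nx^{(k)}_n\longrightarrow x_{n+1}-T_nx_n .
\]
On the other hand, $(\mathcal Lx^{(k)})_n\to y_n$. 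By uniqueness of limits, $x_{n+1}-T_nx_n=y_n$ for every $n\in\mathbb Z$, that is, $\mathcal Lx=y$ coordinatewise.

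Since $y\in\ell_\infty(X)$, this gives $\mathcal Lx\in\ell_\infty(X)$. Together with $x\in\ell_\infty(X)$, it follows that $x\in D(\mathcal L)$ and $\mathcal Lx=y$, so the graph of $\mathcal L$ is closed.

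No step is a genuine obstacle. The only point needing care is the lack of uniform bounds on $\|T_n\|$, which prevents treating $\mathcal L$ as a bounded operator on all of $\ell_\infty(X)$. This is sidestepped by working one coordinate at a time, where each $T_n$ is continuous, and by letting the boundedness of the limit $y$ supply the membership $x\in D(\mathcal L)$.
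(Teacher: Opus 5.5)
Your proof is correct and follows essentially the same route as the paper. You pass to the limit coordinatewise using the continuity of each individual $T_n$, and use the boundedness of the limit $y$ to conclude $x\in D(\mathcal L)$; you also spell out that $D(\mathcal L)$ is a linear subspace, which the paper dismisses as easy.
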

\begin{proof}
It is easy to verify that $\mathcal L$ is linear. We now prove that it is closed.
Let $(x^k)_{k\in \mathbb N}$ be a sequence in $D(\mathcal L)$ such that $x^k \to x$ and $\mathcal Lx^k \to y$ in $\ell_\infty(X)$. Writing $y^k:=\mathcal L x^k$, for each $n\in\mathbb Z$, we have \begin{equation*}
y_n=\lim_{k\to \infty}y_n^k=\lim_{k\to \infty}(x_{n+1}^k-T_n x_n^k)=\lim_{k\to \infty}x_{n+1}^k-T_n \lim_{k\to \infty}x_n^k=x_{n+1}-T_n x_n 
\end{equation*}
by the continuity of~$T_n$. Thus, $\mathcal L x=y$. Since $y\in\ell_\infty(X)$, it follows that $x\in D(\mathcal L)$. Hence $\mathcal L$ is closed.
\end{proof}
We equip $D(\mathcal L)=\ell'_\infty(X)$ with the graph norm
\begin{equation*}
\|x\|'_\infty:=\|x\|_\infty+\|\mathcal L x\|_\infty ,\qquad x\in\ell'_\infty(X).
\end{equation*}
Since $\mathcal L$ is closed, $(\ell'_\infty(X),\|\cdot\|'_\infty)$ is a Banach space. Moreover,
\begin{equation*}
\|\mathcal L x\|_\infty\leq\|x\|'_\infty,\qquad x\in\ell'_\infty(X),
\end{equation*} 
 and hence
$
\mathcal L:(\ell_\infty'(X),\|\cdot\|_\infty')\longrightarrow(\ell_\infty(X),\|\cdot\|_\infty)
$
is a bounded linear operator.

The following simple result is implicitly established in~\cite{BD}. We present its proof only for the sake of completeness. 
\begin{proposition}\label{prop2}
Let $(T_n)_{n\in \mathbb Z}$ be a sequence in $GL(X)$. Then the following statements are equivalent.
\vskip3pt
\begin{enumerate}[label={(\roman*)},itemindent=1em]
\item[\rm (i)] $(T_n)_{n\in \mathbb Z}$ has the shadowing property.
\vskip2pt
\item[\rm (ii)] $\mathcal L:(\ell_\infty'(X),\|\cdot\|_\infty')\longrightarrow(\ell_\infty(X),\|\cdot\|_\infty)$ is surjective.
\end{enumerate}
\end{proposition}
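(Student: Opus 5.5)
Both implications reduce to a change of variables between pseudo-orbits and the difference operator $\mathcal L$. The direction (i)$\Rightarrow$(ii) uses scaling, which is available because $\mathcal L$ is linear. The direction (ii)$\Rightarrow$(i) uses the open mapping theorem applied to the bounded operator $\mathcal L\colon(\ell'_\infty(X),\|\cdot\|'_\infty)\to(\ell_\infty(X),\|\cdot\|_\infty)$ (Proposition~\ref{prop1}).

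\emph{(ii)$\Rightarrow$(i).} Assume $\mathcal L$ is surjective. Since $\ell'_\infty(X)$ is a Banach space and $\mathcal L$ is bounded, the open mapping theorem gives a constant $K>0$ with the following property: for every $y\in\ell_\infty(X)$ there is $z\in\ell'_\infty(X)$ with $\mathcal Lz=y$ and $\|z\|_\infty\le\|z\|'_\infty\le K\|y\|_\infty$.

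Given $\epsilon>0$, set $\delta:=\epsilon/K$. Let $(x_n)$ be any sequence with $\sup_n\|x_{n+1}-T_nx_n\|\le\delta$, and put $y_n:=x_{n+1}-T_nx_n$. Then $y\in\ell_\infty(X)$ with $\|y\|_\infty\le\delta$. Choose $z$ as above, so $\mathcal Lz=y$ and $\|z\|_\infty\le\epsilon$.

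Define $u_n:=x_n-z_n$. Then $u_{n+1}-T_nu_n=y_n-y_n=0$, so $u$ is an exact trajectory, $u_n=T(n,0)u_0$. Setting $v:=u_0$ gives $\sup_n\|x_n-T(n,0)v\|=\|z\|_\infty\le\epsilon$. Note that $x$ itself need not be bounded; only $z$ is required to lie in $\ell_\infty(X)$.

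\emph{(i)$\Rightarrow$(ii).} Fix $\epsilon=1$ in the shadowing property and let $\delta>0$ be the corresponding constant. Let $y\in\ell_\infty(X)$ with $y\neq0$, and rescale to $\tilde y:=\delta y/\|y\|_\infty$. Build a sequence $x$ with $x_0=0$ and $x_{n+1}=T_nx_n+\tilde y_n$, solving forward for $n\ge0$ and backward via $x_n=T_n^{-1}(x_{n+1}-\tilde y_n)$ for $n<0$. Then $x$ is a $\delta$-pseudo-orbit, though possibly unbounded.

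Shadowing gives $v$ with $\sup_n\|x_n-T(n,0)v\|\le1$. Put $z_n:=x_n-T(n,0)v$. Then $z\in\ell_\infty(X)$ and $\mathcal Lz=\tilde y$, which lies in $\ell_\infty(X)$, so $z\in D(\mathcal L)$. By linearity, $(\|y\|_\infty/\delta)\,z$ lies in $D(\mathcal L)$ and is mapped by $\mathcal L$ to $y$. Hence $\mathcal L$ is surjective.

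The only nontrivial step is the uniform constant $K$ in (ii)$\Rightarrow$(i), since shadowing needs $\delta$ to depend only on $\epsilon$. This is exactly why $\mathcal L$ must be closed and $D(\mathcal L)$ carries the graph norm: together they make $\ell'_\infty(X)$ a Banach space, so the open mapping theorem applies.
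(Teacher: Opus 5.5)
Your proof is correct and follows essentially the same route as the paper: for (i)$\Rightarrow$(ii) you rescale $y$, solve the recurrence using invertibility of the $T_n$, and subtract the shadowing orbit to obtain a bounded preimage. For (ii)$\Rightarrow$(i) you apply the open mapping theorem on the Banach space $(\ell'_\infty(X),\|\cdot\|'_\infty)$ to get a uniform constant $K$, take $\delta=\epsilon/K$, and subtract the bounded solution from the pseudo-orbit to obtain the exact trajectory (the trivial case $y=0$, left implicit in your write-up, is dismissed in one line in the paper as well).
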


\begin{proof}
Suppose that $(T_n)_{n\in \mathbb Z}$ has the shadowing property. Let $\delta>0$ be as in Definition~\ref{nonautshad} corresponding to $\epsilon=1$. Let $y\in \ell_\infty(X)$ be an arbitrary nonzero sequence. (The case $y=0$ is trivial.) Choose $x_0\in X$ arbitrarily. Since each $T_n$, $n\in\mathbb Z$, is invertible, the recurrence
\begin{equation*}
x_{n+1}-T_n x_n=\frac{\delta}{\|y\|_\infty}y_n, \qquad n\in \mathbb Z,
\end{equation*}
determines a unique bi-infinite sequence $(x_n)_{n\in\mathbb Z}$ in~$X$ with the prescribed~$x_0$.
Then 
\begin{equation*}
\sup_{n\in \mathbb Z}\|x_{n+1}-T_n x_n\|=\delta.
\end{equation*}
By the shadowing property,  there exists $v\in X$ such that
 \begin{equation*}
 \sup_{n\in \mathbb Z}\|x_n-T(n, 0)v\|\le 1.
\end{equation*}
Set
\begin{equation*}
z_n:=\frac{\|y\|_\infty}{\delta}(x_n-T(n, 0)v), \qquad n\in \mathbb Z.
\end{equation*}
Then $z=(z_n)_{n\in\mathbb Z}\in\ell_\infty(X)$. Moreover, using
\begin{equation*}
T(n+1,0)=T_nT(n,0),\qquad n\in\mathbb Z,
\end{equation*}
we obtain
\begin{equation*}
\begin{aligned}
(\mathcal Lz)_n
&=z_{n+1}-T_nz_n\\
&=\frac{\|y\|_\infty}{\delta}
\Bigl(
x_{n+1}-T(n+1,0)v
-T_nx_n+T_nT(n,0)v
\Bigr)\\
&=\frac{\|y\|_\infty}{\delta}
\bigl(x_{n+1}-T_nx_n\bigr)=y_n.
\end{aligned}
\end{equation*}
Thus, $\mathcal Lz=y$. Since $z\in\ell_\infty(X)$ and
$\mathcal L z=y\in\ell_\infty(X)$, it follows that $z\in \ell'_\infty(X)$.
Hence $\mathcal L$ is surjective.

Suppose now that $\mathcal L$ is surjective. By the Open Mapping Theorem (see~\cite[Theorem 2.12.1]{HP}), there exists $K>0$ such that, for each $z\in \ell_\infty(X)$, there exists $y\in \ell_\infty'(X)$ such that 
\begin{equation}\label{259}
\mathcal Ly=z \quad \text{and} \quad 
 \|y\|_\infty'\le K\|z\|_\infty.\end{equation} Given $\epsilon>0$, set $\delta:=\frac{\epsilon}{K}$. Let $(x_n)_{n\in \mathbb Z}$ be an arbitrary sequence in~$X$ such that 
\begin{equation*}    
\sup_{n\in \mathbb Z}\|x_{n+1}-T_n x_n\| \le \delta.
\end{equation*}
Set
\begin{equation}\label{591}
z_n:=x_{n+1}-T_n x_n, \qquad n\in \mathbb Z.
\end{equation}
Then $z=(z_n)_{n\in \mathbb Z}\in \ell_\infty(X)$ with $\|z\|_\infty \le \delta$. Choose $y=(y_n)_{n\in \mathbb Z}\in \ell_\infty'(X)$ such that~\eqref{259} holds. From $\mathcal L y=z$ and~\eqref{591}, we obtain\begin{equation*}
y_{n+1}-T_n y_n=z_n=x_{n+1}-T_n x_n,\qquad n\in \mathbb Z.
\end{equation*}
Hence
\begin{equation*}
x_{n+1}-y_{n+1}=T_n(x_n-y_n), \qquad n\in \mathbb Z,
\end{equation*}
which implies that 
\begin{equation}\label{306}
x_n-y_n=T(n,0)(x_0-y_0),\qquad n\in\mathbb Z.
\end{equation}
Set $v:=x_0-y_0$. By~\eqref{259} and~\eqref{306}, we have for $n\in\mathbb Z$,
\begin{equation*}
\|x_n-T(n, 0)v\|=\|x_n-(x_n-y_n)\|=\|y_n\|\le\|y\|_\infty \le \|y\|'_\infty \le K\|z\|_\infty \le K\delta=\epsilon.
\end{equation*}
Thus, $(T_n)_{n\in \mathbb Z}$ has the shadowing property.
\end{proof}

We shall also need the following homogeneous right-inverse perturbation result.
Its proof is analogous to its special case in ~\cite[Proposition 3.12]{DP}.

\begin{proposition}\label{prop3}
Let $Y_i$, $i=1, 2$, be complex Banach spaces, and let $T\in L(Y_1, Y_2)$ be surjective. Then there exists 
$R_T\in H(Y_2, Y_1)$ such that 
\begin{equation}\label{617}
TR_T=I_{Y_2}.
\end{equation}
 Moreover, if $R_T\in H(Y_2, Y_1)$ satisfies~\eqref{617} and 
\begin{equation*}
0<\alpha <\frac 1 K, \qquad \text{where $K:=\|R_T\|$,}
\end{equation*}
then, for every $S\in L(Y_1, Y_2)$ with $\|T-S\| \le \alpha$, there exists $R_S\in H(Y_2, Y_1)$ such that
\begin{equation}\label{440}
SR_S=I_{Y_2}\qquad\text{and}\qquad \|R_S\| \le \frac{K}{1-\alpha K}.
\end{equation}
\end{proposition}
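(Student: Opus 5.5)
The statement has two parts: the existence of a continuous homogeneous right inverse of a surjective operator, and the stability of such a right inverse under small perturbations of the operator. The first part is essentially the Bartle--Graves theorem, made homogeneous. The second part is an iterative (Neumann-type) correction.

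\emph{Existence.} Since $T$ is surjective, the Open Mapping Theorem gives $c>0$ such that every $y\in Y_2$ has a preimage of norm at most $c\|y\|$. I would take a continuous (nonlinear) right inverse $\rho$ of $T$, which exists by the Bartle--Graves theorem. A standard refinement gives $\|\rho(y)\|\le c'\|y\|$; alternatively I would use Michael's selection theorem applied to the lower semicontinuous convex-valued map $y\mapsto T^{-1}(y)\cap \overline B(0,2c\|y\|)$.

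To make $\rho$ homogeneous, I would restrict it to the unit sphere $S$ of $Y_2$ and average over the circle:
\begin{equation*}
R(y):=\|y\|\int_0^{1}e^{-2\pi i\theta}\rho\bigl(e^{2\pi i\theta}y/\|y\|\bigr)\,d\theta,\qquad R(0)=0.
\end{equation*}
Linearity of $T$ gives $TR(y)=y$. The substitution $\theta\mapsto\theta+\arg\lambda$ gives $R(\lambda y)=\lambda R(y)$. Continuity away from $0$ follows from continuity of $\rho$ and dominated convergence. Continuity at $0$ follows from the bound $\|R(y)\|\le c'\|y\|$.

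\emph{Perturbation.} Let $R_T$ satisfy~\eqref{617}, with $K=\|R_T\|$ and $\|T-S\|\le\alpha<1/K$. Put $E:=T-S$. Then $SR_T=I-ER_T$, where $ER_T\in H(Y_2)$ and $\|ER_T\|\le\alpha K<1$. A nonlinear Neumann series is not directly available because $ER_T$ is nonlinear. Instead I would define the iteration
\begin{equation*}
y_0:=y,\qquad y_{k+1}:=ER_T(y_k),\qquad R_S(y):=\sum_{k\ge0}R_T(y_k).
\end{equation*}
Then $\|y_k\|\le(\alpha K)^k\|y\|$. Hence the series converges absolutely, uniformly on bounded sets, and $\|R_S(y)\|\le K\|y\|/(1-\alpha K)$.

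Each map $y\mapsto y_k$ is a composition of continuous homogeneous maps, so it is continuous and homogeneous. By uniform convergence, $R_S\in H(Y_2,Y_1)$. Finally, a telescoping computation gives
\begin{equation*}
SR_S(y)=\sum_k\bigl(TR_T(y_k)-ER_T(y_k)\bigr)=\sum_k(y_k-y_{k+1})=y,
\end{equation*}
using continuity of $S$ to pass $S$ through the sum. This establishes~\eqref{440}.

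The main obstacle is the existence step, namely obtaining a continuous right inverse with a linear norm bound. I would rely on Bartle--Graves or Michael selection as a known result, exactly as in~\cite[Proposition~3.12]{DP}, and then apply the circle-averaging homogenization described above. The perturbation step is routine once the iteration is set up.
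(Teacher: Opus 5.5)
Your proof is correct, and the perturbation part matches the paper's argument: the paper sets $F_1(y)=y$, $F_{n+1}(y)=(T-S)(R_T(F_n(y)))$, defines $R_S(y)=\sum_{n\ge1}R_T(F_n(y))$, and telescopes via $S(R_T(F_n(y)))=F_n(y)-F_{n+1}(y)$, which is your iteration up to an index shift. The one difference is the existence of $R_T$: the paper simply cites M\"uller's book (Chap.~IV, Sec.~27, Prop.~5(i)), whereas you build it from a linearly bounded continuous selection (Bartle--Graves or Michael) followed by circle averaging; this construction is sound, and you correctly noted that the linear bound on the unit sphere is what makes the averaged map continuous at $0$.
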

\begin{proof}
The existence of $R_T\in H(Y_2, Y_1)$ satisfying~\eqref{617} follows from \cite[Chap.~IV, Sec.~27, Proposition~5~(i)]{Mul}. Define a sequence of functions $F_n\colon Y_2\rightarrow Y_2$, $n\in\mathbb N$, recursively by $F_1(y)=y$ for $y\in Y_2$ and  
\begin{equation}\label{xndef}
	F_{n+1}(y)=(T-S)(R_T(F_n(y)))\qquad\text{for $y\in Y_2$ and $n\in\mathbb N$}.
\end{equation}
It follows by easy induction that the functions~$F_n$, $n\in\mathbb N$, are continuous and homogeneous, i.e., $F_n\in H(Y_2)$ for $n\in\mathbb N$. It follows that 
\begin{equation*}
	\|F_{n+1}(y)\|\leq\|T-S\|\|R_T\|\|F_n(y)\|\leq K\alpha\|F_n(y)\|\qquad\text{for $y\in Y_2$ and $n\in\mathbb N$}. 
\end{equation*}
Hence,
\begin{equation}\label{xnest}
	\|F_n(y)\|\leq (K\alpha)^{n-1}\|y\|\qquad\text{for all $y\in Y_2$ and $n\in\mathbb N$}.
\end{equation}
This yields
\begin{equation}\label{RTest}
	\|R_T(F_n(y))\|\leq \|R_T\|\|F_n(y)\|\leq K(K\alpha)^{n-1}\|y\|\qquad\text{for all $y\in Y_2$ and $n\in\mathbb N$}.
\end{equation}
Since $0< K\alpha<1$, we can define $R_S\colon Y_2 \to Y_1$ by
\begin{equation}\label{RSdef}
	R_S(y)=\sum_{n=1}^\infty R_T(F_n(y))\qquad\text{for $y\in Y_2$}.
\end{equation}
Since $R_T\in H(Y_2, Y_1)$ and $F_n\in H(Y_2)$, $n\in\mathbb N$,  we have that $R_S$ is homogeneous. From~\eqref{RTest}, it follows that, on each bounded subset of~$Y_2$, the convergence of the series in~\eqref{RSdef} is uniform. By the Weierstrass M-test, $R_S$ is continuous and hence $R_S\in H(Y_2, Y_1)$.  Moreover, from~\eqref{RTest} and~\eqref{RSdef}, we find for $y\in Y_2$,
\begin{equation*}
	\|R_S(y)\|\leq K\sum_{n=1}^\infty(K\alpha)^{n-1} \|y\|=\frac{K}{1-K\alpha}\|y\|,
\end{equation*}
which proves the norm estimate in~\eqref{440}. Next, the identity 
\begin{equation*}
	(T-S)R_T=TR_T-SR_T=I_{Y_2}-SR_T,
\end{equation*}
together with~\eqref{xndef}, implies for $y\in Y_2$ and $n\in\mathbb N$,
\begin{equation*}
	F_{n+1}(y)=F_n(y)-S(R_T(F_n(y))).
\end{equation*}
Hence
\begin{equation*}
	S(R_T(F_n(y)))=F_n(y)-F_{n+1}(y)\qquad\text{for all $y\in Y_2$ and $n\in\mathbb N$}.
\end{equation*}
From this and~\eqref{RSdef}, we conclude that for all $y\in Y_2$,
\begin{equation*}
	S(R_S(y))=\sum_{n=1}^\infty S(R_T(F_n(y)))=\sum_{n=1}^\infty(F_n(y)-F_{n+1}(y))=F_1(y)=y.
\end{equation*}
	Thus, \eqref{440} holds.
	\end{proof}
	
	Now we can give proofs of the main results stated in Section~\ref{mainres}.
	
	\begin{proof}[Proof of Theorem~\ref{thm1}]
	\textit{(i) $\Rightarrow$ (ii).}
	Suppose that $(T_n)_{n\in \mathbb Z}$ has the shadowing property. Given $z\ge 1$ and $n\in \mathbb Z$, we define  $\mathcal L_{z, n}\colon \ell_\infty'(X)\to \ell_\infty(X)$ by 
\begin{equation*}
(\mathcal L_{z, n}x)_m=\begin{cases}
\frac{1}{z}x_{m+1}-T_m x_m & m\ge n; \\
zx_{m+1}-T_m x_m & m<n,
\end{cases} \quad \text{for $x=(x_m)_{m\in \mathbb Z}\in \ell_\infty'(X)$,}
\end{equation*}
where $\ell'_\infty(X)$ is equipped with the graph norm $\|\cdot\|'_\infty$. It is easily seen that 
\begin{equation*}
\mathcal L_{z,n}\in L(\ell'_\infty(X),\ell_\infty(X)),\qquad z\geq1,\,\,n\in\mathbb Z.
\end{equation*}
Since $|\frac 1 z-1| \le z-1$ for $z\ge 1$, we have for $x\in\ell'_\infty(X)$ and $m\in\mathbb Z$,
\begin{equation*}
\|(\mathcal L_{z, n}x)_m-(\mathcal Lx)_m\| \le (z-1)\|x_{m+1}\|,
\end{equation*}
where $\mathcal L\in L(\ell'_\infty(X),\ell_\infty(X))$ is defined by~\eqref{Ldef}.
Hence
\begin{equation*}
\|\mathcal L_{z, n}x-\mathcal L x\|_\infty \le (z-1)\|x\|_\infty \le (z-1)\|x\|'_\infty
\end{equation*}
whenever $z\geq1$, $n\in\mathbb Z$ and $x\in\ell'_\infty(X)$. Consequently,
\begin{equation}\label{closest}
\|\mathcal L_{z, n}-\mathcal L \| \le z-1,\qquad z\geq1,\,\, n\in\mathbb Z.
\end{equation}
By Proposition~\ref{prop2}, the shadowing property of $(T_n)_{n\in\mathbb Z}$ implies that $\mathcal L$ is surjective.
From this, by Proposition~\ref{prop3}, we conclude that $\mathcal L$ has a right-inverse $\mathcal R\in H(\ell_\infty(X), \ell_\infty'(X))$. Set $K:=\|\mathcal R\|$.  If $z\in (1, \frac{K+1}{K})$, then $0<z-1<\frac{1}{K}$. This, together with~\eqref{closest}, implies that
\begin{equation*}
\|\mathcal L_{z, n}-\mathcal L \| \le z-1<\frac{1}{K}\quad\text{whenever $z\in\biggl(1,\frac{K+1}{K}\biggr)$ and $n\in\mathbb Z$}.
\end{equation*}
It follows from the second part of Proposition~\ref{prop3}  that, for every $z\in (1, \frac{K+1}{K})$ and $n\in \mathbb Z$, $\mathcal L_{z, n}$ has a right-inverse $\mathcal R_{z, n}\in H(\ell_\infty(X), \ell_\infty'(X))$ satisfying 
\begin{equation*}
\|\mathcal R_{z, n}\| \le \frac{K}{1-(z-1)K}.
\end{equation*}
Fix $t\in (\frac{K}{K+1}, 1)$ and $n\in \mathbb Z$. Then $z:=1/t\in(1,\frac{K+1}{K})$. Therefore, $\mathcal L_{1/t, n}$ has a right-inverse $\mathcal R_{1/t, n}\in H(\ell_\infty(X), \ell_\infty'(X))$ satisfying
\begin{equation}\label{Rnormest}
\|\mathcal R_{1/t, n}\| \le M,\qquad\text{where $
M:=\frac{K}{1-(1/t-1)K}>0$}.
\end{equation}
We construct a sequence of maps $G_{m, n}\in H(X)$, $m\in \mathbb Z$, with the following properties:
\begin{equation}\label{900}
\|G_{m, n}(v)\|\le Mt^{|m-n|}\|v\| \quad \text{whenever $v\in X$ and $m\in \mathbb Z$,}
\end{equation}
\begin{equation}\label{906}
G_{n+1, n}-T_n G_{n, n}=I,
\end{equation}
and
\begin{equation}\label{907}
G_{m+1, n}-T_m G_{m, n}=0 \quad\text{whenever $m\in\mathbb Z$ and $m\neq n$}.
\end{equation}
Let $v\in X$. Define $y(v)=(y_m(v))_{m\in \mathbb Z}\in \ell_\infty(X)$ by 
\begin{equation*}
y_m(v)=\begin{cases}
v,\quad &m=n; \\
0,\quad & m\neq n.
\end{cases}
\end{equation*}
For $m\in\mathbb Z$, define $G_{m,n}\colon X\to X$ by
\begin{equation*}
G_{m,n}(v):=t^{|m-n|}\bigl(\mathcal R_{1/t,n}(y(v))\bigr)_m,\qquad v\in X.
\end{equation*}
The map $v\mapsto y(v)$ from~$X$ to $\ell_\infty(X)$ is continuous and homogeneous, $\mathcal R_{1/t,n}$ is continuous and homogeneous and the coordinate evaluation $x\mapsto x_m$, $m\in\mathbb Z$, from $\ell'_\infty(X)$ to~$X$ is linear and continuous. Thus, $G_{m,n}\in H(X)$ for each $m\in\mathbb Z$, as a composition of continuous and homogeneous maps. Clearly, $\|y(v)\|_\infty=\|v\|$ for $v\in X$. This, together with~\eqref{Rnormest}, implies that~\eqref{900} holds. Writing $y=y(v)$ and $x=\mathcal R_{1/t,n}y$, we have $\mathcal L_{1/t,n} x=y$. From this and the definition of $\mathcal L_{1/t,n} x$, we find for $v\in X$ and $m\geq n$,
\begin{equation*}
\begin{aligned}
G_{m+1,n}(v)-T_mG_{m,n}(v)
&=t^{|m+1-n|}x_{m+1}-t^{|m-n|}T_m x_m\\
&=t^{m-n}\bigl(tx_{m+1}-T_mx_m\bigr)\\
&=t^{m-n}\bigl(\mathcal L_{1/t,n}x\bigr)_m\\
&=t^{m-n}y_m,
\end{aligned}
\end{equation*}
while, for $m<n$,
\begin{equation*}
\begin{aligned}
G_{m+1,n}(v)-T_mG_{m,n}(v)
&=t^{|m+1-n|}x_{m+1}-t^{|m-n|}T_m x_m\\
&=t^{n-m}\left(\frac1t x_{m+1}-T_mx_m\right)\\
&=t^{n-m}\bigl(\mathcal L_{1/t,n}x\bigr)_m\\
&=t^{n-m}y_m.
\end{aligned}
\end{equation*}
From this and from the definition of $y=y(v)$, it follows for $v\in X$ and $m\in\mathbb Z$,
\begin{equation*}
G_{m+1,n}(v)-T_m G_{m,n}(v)=\begin{cases}
v,\quad & m=n; \\
0,\quad & m\neq n.
\end{cases}
\end{equation*}
This implies both~\eqref{906} and~\eqref{907}.

Now set
\begin{equation*}
C_n:=G_{n,n-1}\in H(X),\qquad n\in\mathbb Z.
\end{equation*}
From~\eqref{906} and~\eqref{907}, it follows easily by forward and backward induction that
\begin{equation}\label{greenformula}
G_{m,n-1}=
\begin{cases}
T(m,n)C_n, & m\ge n,\\[1mm]
-T(m,n)(I-C_n), & m<n.
\end{cases}
\end{equation}
This, combined with~\eqref{900}, yields for $m\geq n$,
 \begin{equation*}
\|T(m,n)C_n\|=\|G_{m,n-1}\|
\le M t^{m-n+1}\leq\frac{M}{t}t^{m-n}.
\end{equation*}
On the other hand, for $m<n$, we have
\begin{equation*}
\|T(m,n)(I-C_n)\|
=\|G_{m,n-1}\|
\le M t^{|m-n+1|}
=\frac{M}{t}t^{n-m}.
\end{equation*}
Equivalently, after interchanging~$m$ and $n$, we obtain for $m>n$,
\begin{equation*}
\|T(n,m)(I-C_m)\|
\leq\frac{M}{t}t^{m-n}.
\end{equation*}
It remains to  consider the second estimate in the case $m=n$. By~\eqref{900}, 
\begin{equation*}
\|C_n\|=\|G_{n,n-1}\|\leq M t,\qquad n\in\mathbb Z.
\end{equation*}
Hence, 
\begin{equation*}
\|I-C_n\|\le 1+\|C_n\|\leq 1+Mt,\qquad n\in\mathbb Z.
\end{equation*}
Therefore, the exponential estimates in Definition~\ref{gphdef} are satisfied with
\begin{equation*}
D:=\max\biggl\{\frac{M}{t},1+Mt\biggr\}>0\qquad\text{and}\qquad \lambda:=-\log t>0.
\end{equation*}
Thus, $(T_n)_{n\in\mathbb Z}$ is generalized pseudo-hyperbolic, and the proof of $(i)\Rightarrow(ii)$ is complete.
\medskip

\textit{(ii) $\Rightarrow$ (iii).}
Suppose that $(T_n)_{n\in\mathbb Z}$ is generalized pseudo-hyperbolic. Let 
$C_n\in H(X)$, $n\in\mathbb Z$, and constants $D,\lambda>0$ have the meaning from Definition~\ref{gphdef}.
Define the Green family $G\colon \mathbb Z\times\mathbb Z\to H(X)$ by 
\begin{equation}\label{Gmndef}
G_{m,n}=
\begin{cases}
T(m,n+1)C_{n+1},\quad & m\geq n+1,\\[1mm]
-T(m,n+1)(I-C_{n+1}),\quad & m<n+1.
\end{cases}
\end{equation}
From \eqref{Gmndef}, it follows that
\begin{equation}\label{greenident}
G_{m+1,n}-T_mG_{m,n}=
\begin{cases}
I,\quad & m=n,\\
0,\quad & m\ne n.
\end{cases}
\end{equation}
Indeed, if $m\geq n+1$, then  $m+1\geq n+1$, and by the upper equality in~\eqref{Gmndef},
\begin{equation*}
G_{m+1,n}
=T(m+1,n+1)C_{n+1}
=T_mT(m,n+1)C_{n+1}
=T_mG_{m,n}.
\end{equation*}
Hence
\begin{equation*}
G_{m+1,n}-T_m G_{m,n}=0,\qquad m\geq n+1.
\end{equation*}
If $m<n$, then $m<n+1$ and $m+1<n+1$, and by the lower equality in~\eqref{Gmndef},
\begin{equation*}
\begin{aligned}
G_{m+1,n}
&=-T(m+1,n+1)(I-C_{n+1})\\
&=-T_mT(m,n+1)(I-C_{n+1})
=T_mG_{m,n}.
\end{aligned}
\end{equation*}
Hence
\begin{equation*}
G_{m+1,n}-T_m G_{m,n}=0,\qquad m<n.
\end{equation*}
Finally, if $m=n$, then
\begin{equation*}
G_{n+1,n}=C_{n+1},
\end{equation*}
while
\begin{equation*}
T_nG_{n,n}
=-T_nT(n,n+1)(I-C_{n+1})
=-(I-C_{n+1}).
\end{equation*}
Hence
\begin{equation*}
G_{n+1,n}-T_nG_{n,n}
=C_{n+1}+I-C_{n+1}=I.
\end{equation*}
Thus, \eqref{greenident} holds.
The exponential estimates from Definition~\ref{gphdef} imply that
\begin{equation}\label{greenesti}
\|G_{m,n}\|\le De^{-\lambda|m-n-1|},
\qquad m,n\in\mathbb Z.
\end{equation}
Set
\begin{equation*}
\mu:=D\sum_{j\in\mathbb Z}e^{-\lambda|j|}
=D\frac{1+e^{-\lambda}}{1-e^{-\lambda}}>0.
\end{equation*}
Let $\epsilon>0$. Choose $\delta:=\epsilon/\mu$. Let $(g_n)_{n\in\mathbb Z}$ be  an arbitrary sequence in $Homeo(X)$ satisfying
\begin{equation*}
\sup_{n\in\mathbb Z}\|T_n-g_n\|_\infty\le\delta.
\end{equation*}
Put
\begin{equation}\label{fndef}
f_n:=T_n-g_n,\qquad n\in\mathbb Z.
\end{equation}
Then 
\begin{equation}\label{fnest}
\sup_{x\in X}\|f_n(x)\|=\sup_{x\in X}\|T_n(x)-g_n(x)\|=\|T_n-g_n\|_\infty\leq\delta,\qquad n\in\mathbb Z.
\end{equation}
For $n\in\mathbb Z$ and $x\in X$, define
\begin{equation}\label{Hndef}
H_n(x):=\sum_{m\in\mathbb Z}
G_{n,m}\bigl(f_m(g(m,n)(x))\bigr).
\end{equation}
By \eqref{greenesti} and~\eqref{fnest}, we have for $n,m\in\mathbb Z$,
\begin{equation*}
\begin{aligned}
\|G_{n,m}(f_m(g(m,n)(x)))\|
&\le De^{-\lambda|n-m-1|}\,
   \|f_m(g(m,n)(x))\|\\
&\le D\delta e^{-\lambda|n-m-1|}.
\end{aligned}
\end{equation*}
Since
\begin{equation*}
\sum_{m\in\mathbb Z}e^{-\lambda|n-m-1|}
=\sum_{j\in\mathbb Z}e^{-\lambda|j|}<\infty,\qquad n\in\mathbb Z,
\end{equation*}
the series in \eqref{Hndef} converges uniformly in $x\in X$ for every $n\in\mathbb Z$. Hence, each $H_n$, $n\in\mathbb Z$, is continuous. Furthermore,
\begin{equation*}
\begin{aligned}
\|H_n(x)\|
\le D\delta\sum_{m\in\mathbb Z}e^{-\lambda|n-m-1|}=D\delta \sum_{j\in\mathbb Z}e^{-\lambda|j|}=\mu
\delta=\epsilon
\end{aligned}
\end{equation*}
for $n\in\mathbb Z$ and $x\in X$.
Hence
\begin{equation}\label{eq:hbound}
\sup_{n\in\mathbb Z}\|H_n\|_\infty\le\epsilon.
\end{equation}
We now verify the semiconjugacy relation. By the evolution property of the family associated with $(g_n)_{n\in\mathbb Z}$,
\begin{equation*}
g(m,n+1)\circ g_n=g(m,n),
\qquad m,n\in\mathbb Z.
\end{equation*}
Consequently, using \eqref{Hndef},
\begin{equation*}
\begin{aligned}
H_{n+1}(g_n(x))-T_n H_n(x)
&=\sum_{m\in\mathbb Z}
G_{n+1,m}\bigl(f_m(g(m,n+1)(g_n(x)))\bigr)\\
&\qquad-T_n\sum_{m\in\mathbb Z}
G_{n,m}\bigl(f_m(g(m,n)(x))\bigr)\\
&=\sum_{m\in\mathbb Z}
\bigl(G_{n+1,m}-T_nG_{n,m}\bigr)
\bigl(f_m(g(m,n)(x))\bigr).
\end{aligned}
\end{equation*}
From~\eqref{greenident}, it follows that the only nonzero term in the last sum corresponds to $m=n$, and since $g(n,n)=I$, we have for $n\in\mathbb Z$ and $x\in X$,
\begin{equation}\label{Hncorr}
H_{n+1}(g_n(x))-T_n H_n(x)=f_n(x).
\end{equation}
Define
\begin{equation*}
h_n:=I+H_n,\qquad n\in\mathbb Z.
\end{equation*}
Since $f_n=T_n-g_n$, relation \eqref{Hncorr} gives
\begin{equation*}
\begin{aligned}
h_{n+1}(g_n(x))
&=g_n(x)+H_{n+1}(g_n(x))\\
&=g_n(x)+T_n H_n(x)+f_n(x)\\
&=g_n(x)+T_n H_n(x)+T_nx-g_n(x)\\
&=T_n H_n(x)+T_n x\\
&=T_n h_n(x).
\end{aligned}
\end{equation*}
Hence
\begin{equation}\label{topc1}	
h_{n+1}\circ g_n=T_n\circ h_n,
\qquad n\in\mathbb Z.
\end{equation}

Finally, by \eqref{eq:hbound},
\begin{equation}\label{topc2}	
\sup_{n\in\mathbb Z}\|h_n-I\|_\infty
=\sup_{n\in\mathbb Z}\|H_n\|_\infty
\le\epsilon.
\end{equation}

Thus, $(T_n)_{n\in\mathbb Z}$ is topologically stable. This proves $(ii)\Rightarrow(iii)$.
\medskip

\textit{(iii) $\Rightarrow$ (i).}
Suppose that $(T_n)_{n\in\mathbb Z}$ is topologically stable. Let $\epsilon>0$ and choose $\delta>0$ from Definition~\ref{nonauttopstab}. Let $(x_n)_{n\in\mathbb Z}$ be an arbitrary sequence in~$X$ satisfying
\begin{equation}\label{pseudo}
\sup_{n\in\mathbb Z}\|x_{n+1}-T_nx_n\|\le\delta.
\end{equation}
Define
\begin{equation*}
g_n(x):=T_nx+d_n,\qquad n\in\mathbb Z,
\end{equation*}
where $d_n:=x_{n+1}-T_nx_n$ for $n\in\mathbb Z$.
Since $T_n\in GL(X)$, each $g_n$, $n\in\mathbb Z$, is a homeomorphism, and
\begin{equation*}
\|g_n-T_n\|_\infty=\|d_n\|\le\delta
\end{equation*}
by~\eqref{pseudo}. By the choice of~$\delta$,
there exists a sequence $(h_n)_{n\in\mathbb Z}$ of continuous self-maps of~$X$ such that
\begin{equation}\label{topconjmod}
h_{n+1}\circ g_n=T_n\circ h_n,\qquad n\in\mathbb Z,
\end{equation}
and
\begin{equation}\label{topclosemod}
\sup_{n\in\mathbb Z}\|h_n-I\|_\infty\le\epsilon.
\end{equation}
 Since $g_n(x_n)=x_{n+1}$ for $n\in\mathbb Z$, \eqref{topconjmod} implies that
\begin{equation*}
h_{n+1}(x_{n+1})=T_n h_n(x_n),\qquad n\in\mathbb Z.
\end{equation*}
From this, it follows by forward and backward induction that
\begin{equation*}
h_n(x_n)=T(n,0)h_0(x_0),\qquad n\in\mathbb Z.
\end{equation*}
This, together with~\eqref{topclosemod}, implies that if we define $v:=h_0(x_0)$, then\begin{equation*}
\|x_n-T(n,0)v\|=\|x_n-h_n(x_n)\|\le\epsilon, \qquad n\in\mathbb Z.
\end{equation*}
Thus, $(T_n)_{n\in\mathbb Z}$ has the shadowing property.
		\end{proof}
	
	\begin{proof}[Proof of Theorem~\ref{thm2}]
	
	\textit{(i) $\Leftrightarrow$ (ii).}
By \cite[Theorem~2.2]{DP}, $T\in GL(X)$ has the shadowing property if and only if it is surjectively hyperbolic.

\noindent
\textit{(i) $\Rightarrow$ (iii).}
Suppose that $T\in GL(X)$ has the shadowing property.  Consider the constant sequence $(T_n)_{n\in\mathbb Z}$ in $GL(X)$, where $T_n=T$ for all $n\in\mathbb Z$. Since the corresponding evolution family is given by
\begin{equation}\label{autevol}
T(m,n)=T^{m-n},\qquad m,\,n\in\mathbb Z,
\end{equation}
it follows from Definitions~\ref{autshad} and~\ref{nonautshad} that the shadowing property of~$T$ is equivalent to the shadowing property of $(T_n)_{n\in\mathbb Z}$. 
Therefore, by Theorem~\ref{thm1}, this constant sequence $(T_n)_{n\in\mathbb Z}$ is generalized pseudo-hyperbolic. Thus, there exist
a sequence $(C_n)_{n\in\mathbb Z}$ in $H(X)$ and constants $D,\lambda>0$ such that,
for every $m\ge n$,
\begin{equation*}
\|T^{m-n}C_n\|\le De^{-\lambda(m-n)}
\end{equation*}
and
\begin{equation*}
\|T^{n-m}(I-C_m)\|\le De^{-\lambda(m-n)}.
\end{equation*}
Taking $m=k$ and $n=0$ in the first inequality and $m=0$ and $n=-k$ in the second inequality, it follows that the exponential estimates in Definition~\ref{autgph} are satisfied with $C:=C_0$. Thus, $T$ is generalized pseudo-hyperbolic.
\medskip

\textit{(iii) $\Rightarrow$ (iv).}
Suppose that $T$ is generalized pseudo-hyperbolic, and let $D, \lambda>0$ and $C\in H(X)$ be as in Definition~\ref{autgph}. Let $T_n\equiv T$ be the same constant sequence in $GL(X)$ as in the previous part of the proof.
The exponential estimates in Definition~\ref{autgph}, together with~\eqref{autevol}, imply that the constant sequence $(T_n)_{n\in\mathbb Z}$ satisfies Definition~\ref{gphdef} with $C_n:=C$ for all $n\in\mathbb Z$, i.e., it is generalized pseudo-hyperbolic. It follows that the corresponding Green kernel~\eqref{Gmndef} has the form
\begin{equation*}
G_{m,n}:=
\begin{cases}
T^{m-n-1}C, & m\ge n+1,\\[1mm]
-T^{m-n-1}(I-C), & m<n+1.
\end{cases}
\end{equation*}
This implies that
\begin{equation}\label{autgreen}
G_{m,n}=G_{m-n,0},\qquad m,n\in\mathbb Z.
\end{equation}
By Theorem~\ref{thm1}, the generalized pseudo-hyperbolicity of the constant sequence $(T_n)_{n\in\mathbb Z}$ implies its topological stability. More precisely, given $\epsilon>0$, choose $\delta>0$ as in the proof of Theorem~\ref{thm1}, part $(ii)\Rightarrow(iii)$, for the constant sequence $T_n\equiv T$. Let $g\in Homeo(X)$ satisfy
\begin{equation*}
\|g-T\|_\infty\leq\delta,
\end{equation*}
and set $g_n:=g$ for all $n\in\mathbb Z$. Following the construction in the proof of Theorem~\ref{thm1}, we may choose 
\begin{equation*}
h_n=I+H_n,\qquad n\in\mathbb Z,
\end{equation*}
where $H_n$ are continuous self-maps of~$X$ defined by~\eqref{Hndef} with~$f_n$ as in~\eqref{fndef}. In the present autonomous setting,
\begin{equation*}
f_n=f:=T-g
\qquad\text{and}\qquad
g(m,n)=g^{m-n},
\qquad m,n\in\mathbb Z.
\end{equation*}
This, together with~\eqref{autgreen}, implies that~\eqref{Hndef} can be written in the form
\begin{equation*}
H_n(x):=\sum_{m\in\mathbb Z}
G_{n-m,0}\bigl(f(g^{m-n}(x))\bigr)=\sum_{j\in\mathbb Z}
G_{j,0}\bigl(f(g^{-j}(x))\bigr)
\end{equation*}
for $x\in X$. Thus, $H_n$ is independent of $n$, and hence so is $h_n=I+H_n$. Writing $h:=h_0$, by~\eqref{topc1} and~\eqref{topc2},  we  obtain
\begin{equation*}
h\circ g=T\circ h
\qquad\text{and}\qquad
\|h-I\|_\infty\leq\epsilon.
\end{equation*}
Since $g\in Homeo(X)$ with $\|g-T\|_\infty\leq\delta$ was arbitrary, it follows that $T$ is topologically stable.
\medskip

\textit{(iv) $\Rightarrow$ (i).} By \cite[Corollary~15]{BCDHP}, if $T\in GL(X)$ is topologically stable, then~$T$ has the shadowing property.
\end{proof}

\begin{proof}[Proof of Theorem~\ref{thm3}]
By Proposition~\ref{prop2}, the shadowing property of $(T_n)_{n\in\mathbb Z}$ implies that $\mathcal L\colon\ell'_\infty(X)\to\ell_\infty(X)$ is surjective. Therefore, by the Open Mapping Theorem, there exists $K>0$ such that for every $z\in\ell_\infty(X)$ there exists $y\in\ell'_\infty(X)$ satisfying
\begin{equation}\label{omc}
\mathcal Ly=z \quad \text{and} \quad 
 \|y\|_\infty'\le K\|z\|_\infty.
 \end{equation}
 Choose $\eta\in(0,1/K)$. Suppose that $(f_n)_{n\in\mathbb Z}$ is a sequence of Lipschitz self-maps of~$X$ such that
 \begin{equation*}
 \operatorname{Lip}(f_n)<\eta\qquad\text{and}\qquad F_n=T_n+f_n\in Homeo(X)\quad\text{for $n\in\mathbb Z$}.
 \end{equation*}
 Let $\epsilon>0$ and choose
\begin{equation*}
\delta:=\frac{1-\eta K}{K}\,\epsilon>0.
\end{equation*}
Let $x=(x_n)_{n\in\mathbb Z}$ be a $\delta$-pseudo-orbit of~$(F_n)_{n\in\mathbb Z}$, i.e., $\|d\|_\infty\leq\delta$, where 
\begin{equation*}
d_n:=x_{n+1}-T_nx_n-f_n(x_n),\qquad n\in\mathbb Z.
\end{equation*}
 Define
\begin{equation*}
(\mathcal H(z))_n:=f_n(x_n+z_n)-f_n(x_n),\qquad n\in\mathbb Z,
\end{equation*}
for $z=(z_n)_{n\in\mathbb Z}\in\ell_\infty(X)$.
Then
\begin{equation*}
\|\mathcal H(z)\|_\infty\leq \left (\sup_{n\in\mathbb Z}\operatorname{Lip}(f_n)\right )\|z\|_\infty\le \eta\|z\|_\infty, \qquad z\in\ell_\infty(X),
\end{equation*}
and
\begin{equation}\label{Hlip}
\|\mathcal H(z)-\mathcal H(w)\|_\infty\le \eta\|z-w\|_\infty,
\qquad z,w\in\ell_\infty(X).
\end{equation}
We construct inductively a sequence $z^{(k)}\in\ell'_\infty(X)$, $k\in\mathbb N_0$. Set $z^{(0)}=0$. Suppose that $z^{(k)}\in\ell'_\infty(X)$ has been defined for some $k\in\mathbb N_0$. Since $\mathcal H(z^{(k)})$,~$d$ and $\mathcal Lz^{(k)}$ belong to~$\ell_\infty(X)$,
\begin{equation}\label{baseq}
r^{(k)}:=\mathcal H(z^{(k)})-d-\mathcal Lz^{(k)}\in\ell_\infty(X).
\end{equation}
By~\eqref{omc}, we can choose $h^{(k)}\in\ell_\infty'(X)$ such that
\begin{equation}\label{hkest}
\mathcal Lh^{(k)}=r^{(k)}\qquad\text{and}\qquad
\|h^{(k)}\|_\infty'\le K\|r^{(k)}\|_\infty.
\end{equation}
Define $z^{(k+1)}:=z^{(k)}+h^{(k)}\in\ell'_\infty(X)$. Then
\begin{equation*}
r^{(k+1)}=\mathcal H(z^{(k+1)})-\mathcal H(z^{(k)}),\qquad k\in\mathbb N_0.
\end{equation*}
Consequently, by~\eqref{Hlip} and~\eqref{hkest},
\begin{equation*}
\|r^{(k+1)}\|_\infty
\le \eta\|h^{(k)}\|_\infty\le\eta\|h^{(k)}\|'_\infty
\le \eta K\|r^{(k)}\|_\infty,\qquad k\in\mathbb N_0.
\end{equation*}
Since $r^{(0)}=-d$ (see~\eqref{baseq}), we obtain
\begin{equation}\label{rkest}
\|r^{(k)}\|_\infty\le (\eta K)^k\delta,\qquad k\in\mathbb N_0.
\end{equation}
By~\eqref{hkest} and~\eqref{rkest},
\begin{equation}\label{zkdif}
\|z^{(k+1)}-z^{(k)}\|_\infty'
=\|h^{(k)}\|_\infty'
\le K(\eta K)^k\delta,\qquad k\in\mathbb N_0.
\end{equation}
Since $\eta K<1$, this implies that $(z^{(k)})_{k\in\mathbb N_0}$ is a Cauchy sequence in the Banach space $(\ell_\infty'(X),\|\cdot\|_\infty')$. Consequently, there exists $z\in\ell_\infty'(X)$ such that $z^{(k)}\to z$ in the graph norm $\|\cdot\|'_\infty$. 
In particular,
\begin{equation*}
z^{(k)}\to z\quad\text{and}\quad \mathcal Lz^{(k)}\to\mathcal Lz
\quad\text{in }\ell_\infty(X).
\end{equation*}
Hence
$z=\sum_{k=0}^\infty(z^{(k+1)}-z^{(k)})$,
and~\eqref{zkdif} yields
\begin{equation}\label{zest}
	\|z\|'_\infty\leq 
	\sum_{k=0}^\infty\|z^{(k+1)}-z^{(k)}\|'_\infty
	\leq K\delta\sum_{k=0}^\infty(\eta K)^k=\frac{K\delta}{1-\eta K}=\epsilon.
\end{equation}
Since $r^{(k)}\to0$ by~\eqref{rkest}, and since
\begin{equation*}
z^{(k)}\to z,\qquad
\mathcal Lz^{(k)}\to\mathcal Lz,\qquad\text{and}\qquad
\mathcal H(z^{(k)})\to\mathcal H(z)
\end{equation*}
in $\ell_\infty(X)$, the last convergence being a consequence of~\eqref{Hlip},
letting $k\to\infty$ in~\eqref{baseq} yields
\begin{equation*}
	\mathcal H(z)=d+\mathcal Lz.
\end{equation*}
Equivalently,
\begin{equation*}
f_n(x_n+z_n)-f_n(x_n)=d_n+z_{n+1}-T_n(z_n),\qquad n\in\mathbb Z.
\end{equation*}
By the definition of $d_n$, this implies for $n\in\mathbb Z$,
	\begin{equation*}
f_n(x_n+z_n)-f_n(x_n)=x_{n+1}-T_n(x_n)-f_n(x_n)+z_{n+1}-T_n(z_n).
\end{equation*}
Hence
\begin{equation*}
x_{n+1}+z_{n+1}
=T_n(x_n+z_n)+f_n(x_n+z_n),
\qquad n\in\mathbb Z.
\end{equation*}
Therefore, setting
\begin{equation*}
y_n:=x_n+z_n,\qquad n\in\mathbb Z,
\end{equation*}
we obtain
\begin{equation*}
y_{n+1}=F_n(y_n),\qquad n\in\mathbb Z.
\end{equation*}
Thus, $y=(y_n)_{n\in\mathbb Z}$ is an exact bi-infinite trajectory of $(F_n)_{n\in\mathbb Z}$. Since each $F_n$, $n\in\mathbb Z$, is a homeomorphism,
\begin{equation*}
y_n=F(n,0)(y_0),\qquad n\in\mathbb Z.
\end{equation*}
Finally, \eqref{zest} implies that
\begin{equation*}
\sup_{n\in\mathbb Z}\|x_n-F(n,0)(y_0)\|
=\sup_{n\in\mathbb Z}\|x_n-y_n\|=\sup_{n\in\mathbb Z}\|z_n\|
=\|z\|_\infty
\le \|z\|_\infty'
\le \epsilon.
\end{equation*}
Therefore, $(F_n)_{n\in\mathbb Z}$ has the shadowing property.
\end{proof}

\begin{proof}[Proof of Theorem~\ref{thm4}]
By Theorem~\ref{thm1}, the topological stability of $(T_n)_{n\in\mathbb Z}$ implies that $(T_n)_{n\in\mathbb Z}$ is generalized pseudo-hyperbolic. Let $G_{m,n}\in H(X)$, $m,n\in\mathbb Z$, be the corresponding Green family given by~\eqref{Gmndef}. Thus,
\begin{equation}\label{green_ident}
G_{m+1,n}-T_mG_{m,n}=
\begin{cases}
I,\quad & m=n,\\
0,\quad & m\neq n,
\end{cases}
\end{equation}
and there exist constants $D,\lambda>0$ such that
\begin{equation}\label{green_est}
\|G_{m,n}\|\le De^{-\lambda|m-n-1|},
\qquad m,n\in\mathbb Z.
\end{equation}
Set
\begin{equation}\label{mudef}
K_0:=D\sum_{j\in\mathbb Z}e^{-\lambda|j|}
=D\frac{1+e^{-\lambda}}{1-e^{-\lambda}}>0.
\end{equation}
Choose
\begin{equation}\label{eta_choice}
0<\eta<\frac1{K_0}.
\end{equation}
Suppose that $(f_n)_{n\in\mathbb Z}$ is a sequence of Lipschitz self-maps of $X$ such that
\begin{equation*}
\operatorname{Lip}(f_n)<\eta\qquad\text{and}\qquad F_n:=T_n+f_n\in Homeo(X)
\quad\text{for all $n\in\mathbb Z$}.
\end{equation*}
We prove that $(F_n)_{n\in\mathbb Z}$ is topologically stable.
Let $\epsilon>0$ and choose
\begin{equation}\label{delta_choice}
\delta:=\frac{1-\eta K_0}{K_0}\,\epsilon>0.
\end{equation}
Let $(g_n)_{n\in\mathbb Z}$ be an arbitrary sequence in $Homeo(X)$ satisfying
\begin{equation}\label{g_close}
\sup_{n\in\mathbb Z}\|F_n-g_n\|_\infty\le\delta.
\end{equation}
Set
\begin{equation}\label{pndef}
p_n:=F_n-g_n,
\qquad n\in\mathbb Z.
\end{equation}
Then
\begin{equation}\label{pn_est}
\sup_{n\in\mathbb Z}\|p_n\|_\infty\le\delta.
\end{equation}
Let $C_b(X)$ denote the space of bounded continuous self-maps of~$X$ equipped with the norm $\|\cdot\|_\infty$. Since $C_b(X)$ is a Banach space, the space $\ell_\infty(C_b(X))=\ell_\infty(\mathbb Z,C_b(X))$ is also a Banach space. Recall that the norm on $\ell_\infty(C_b(X))$ is given by
\begin{equation*}
\|q\|_\infty:=\sup_{n\in\mathbb Z}\|q_n\|_\infty<\infty\qquad\text{for $q=(q_n)_{n\in\mathbb Z}\in\ell_\infty(C_b(X))$}.
\end{equation*}
Given $q=(q_n)_{n\in\mathbb Z}\in\ell_\infty(C_b(X))$, define
\begin{equation}\label{Rg_def}
(\mathcal R_gq)_n(x)
:=\sum_{m\in\mathbb Z}G_{n,m}\bigl(q_m(g(m,n)(x))\bigr)
\end{equation}
for $n\in\mathbb Z$ and $x\in X$, where $g(m,n)$ denotes the evolution family associated with $(g_n)_{n\in\mathbb Z}$.
By~\eqref{green_est},
\begin{equation*}
\begin{aligned}
\|G_{n,m}(q_m(g(m,n)(x)))\|
&\le De^{-\lambda|n-m-1|}\,
\|q_m(g(m,n)(x))\|\\
&\le De^{-\lambda|n-m-1|}\|q\|_\infty.
\end{aligned}
\end{equation*}
Since
\begin{equation*}
\sum_{m\in\mathbb Z}e^{-\lambda|n-m-1|}
=\sum_{j\in\mathbb Z}e^{-\lambda|j|}<\infty,
\qquad n\in\mathbb Z,
\end{equation*}
the series in~\eqref{Rg_def} converges uniformly in $x\in X$, for every $n\in\mathbb Z$. Hence each $(\mathcal R_gq)_n$, $n\in\mathbb Z$, is continuous and
\begin{equation*}
\|\mathcal R_gq\|_\infty
:=\sup_{n\in\mathbb Z}\|(\mathcal R_gq)_n\|_\infty
\leq D\|q\|_\infty\sum_{m\in\mathbb Z}e^{-\lambda|n-m-1|}
= D\|q\|_\infty\sum_{j\in\mathbb Z}e^{-\lambda|j|}.
\end{equation*}
Hence, by~\eqref{mudef},
\begin{equation}\label{Rg_est}
	\|\mathcal R_gq\|_\infty\leq K_0\|q\|_\infty. 
\end{equation}
Thus, $\mathcal R_g$ maps $\ell_\infty(C_b(X))$ into itself.
Moreover, by the evolution property,
\begin{equation*}
g(m,n+1)\circ g_n=g(m,n),
\qquad m,n\in\mathbb Z.
\end{equation*}
Therefore, using~\eqref{Rg_def}, we obtain for $n\in\mathbb Z$ and $x\in X$,
\begin{equation*}
\begin{aligned}
&(\mathcal R_gq)_{n+1}(g_n(x))-T_n(\mathcal R_gq)_n(x)\\
&=\sum_{m\in\mathbb Z}G_{n+1,m}
\bigl(q_m(g(m,n+1)(g_n(x)))\bigr)
-T_n\sum_{m\in\mathbb Z}G_{n,m}
\bigl(q_m(g(m,n)(x))\bigr)\\
&=\sum_{m\in\mathbb Z}
\bigl(G_{n+1,m}-T_nG_{n,m}\bigr)
\bigl(q_m(g(m,n)(x))\bigr).
\end{aligned}
\end{equation*}
By~\eqref{green_ident}, the only nonzero term in the last sum corresponds to $m=n$. Since $g(n,n)=I$, it follows that for every $x\in X$,
\begin{equation}\label{Rg_rightinverse}
(\mathcal R_gq)_{n+1}(g_n(x))-T_n(\mathcal R_gq)_n(x)=q_n(x),
\qquad n\in\mathbb Z.
\end{equation}
For $h=(h_n)_{n\in\mathbb Z}\in\ell_\infty(C_b(X))$, define
\begin{equation}\label{N_def}
(\mathcal N(h))_n(x):=f_n(x+h_n(x))-f_n(x),
\qquad n\in\mathbb Z,\quad x\in X.
\end{equation}
Since $\operatorname{Lip}(f_n)<\eta$ for all $n\in\mathbb Z$, it follows that
\begin{equation}\label{N_bound}
\|\mathcal N(h)\|_\infty\le\eta\|h\|_\infty,\qquad h\in\ell_\infty(C_b(X)).
\end{equation}
Similarly,
\begin{equation}\label{N_lip}
\|\mathcal N(h)-\mathcal N(k)\|_\infty
\le\eta\|h-k\|_\infty,\qquad h,\,k\in\ell_\infty(C_b(X)).
\end{equation}
Thus, $\mathcal N$ is a Lipschitz self-map of $\ell_\infty(C_b(X))$.

For $h=(h_n)_{n\in\mathbb Z}\in\ell_\infty(C_b(X))$, define 
\begin{equation*}
(\mathcal L_gh)_n(x):=h_{n+1}(g_n(x))-T_nh_n(x),
\qquad n\in\mathbb Z,\quad x\in X.
\end{equation*}
We construct inductively a sequence
$h^{(k)}=(h_n^{(k)})_{n\in\mathbb Z}$, $k\in\mathbb N_0$,
in $\ell_\infty(C_b(X))$ such that
$\mathcal L_g h^{(k)}\in\ell_\infty(C_b(X))$ for every $k\in\mathbb N_0$.
Set
$h^{(0)}=0$.
Suppose that $h^{(k)}\in\ell_\infty(C_b(X))$ has been defined such that $\mathcal L_gh^{(k)}\in\ell_\infty(C_b(X))$ for some $k\in\mathbb N_0$. Set
\begin{equation}\label{res_def_general}
r^{(k)}:=p+\mathcal N(h^{(k)})-\mathcal L_gh^{(k)}\in\ell_\infty(C_b(X)),
\end{equation}
where $p=(p_n)_{n\in\mathbb Z}\in\ell_\infty(C_b(X))$ is defined by~\eqref{pndef}.
Define
\begin{equation}\label{ak_def}
a^{(k)}:=\mathcal R_gr^{(k)}\in\ell_\infty(C_b(X))
\end{equation}
and
\begin{equation}\label{hk_step}
h^{(k+1)}:=h^{(k)}+a^{(k)}\in\ell_\infty(C_b(X)).
\end{equation}
By~\eqref{Rg_rightinverse}, $\mathcal L_g\mathcal R_g q=q$ for all $q\in\ell_\infty(C_b(X))$. Hence
\begin{equation*}
\mathcal L_ga^{(k)}=\mathcal L_g\mathcal R_g r^{(k)}=r^{(k)}\in\ell_\infty(C_b(X)).
\end{equation*}
Since $\mathcal L_g$ is linear, this and~\eqref{hk_step} imply that
\begin{equation}\label{Lga_r}
\mathcal L_gh^{(k+1)}=\mathcal L_gh^{(k)}+r^{(k)}\in\ell_\infty(C_b(X)).
\end{equation} 
From \eqref{res_def_general} and~\eqref{Lga_r}, we obtain
\begin{equation*}
\begin{aligned}
r^{(k+1)}
&=p+\mathcal N(h^{(k+1)})-\mathcal L_gh^{(k+1)}\\
&=p+\mathcal N(h^{(k+1)})-\mathcal L_gh^{(k)}-r^{(k)}\\
&=\mathcal N(h^{(k+1)})-\mathcal N(h^{(k)}).
\end{aligned}
\end{equation*}
Consequently,~\eqref{N_lip}, \eqref{hk_step}, ~\eqref{ak_def} and \eqref{Rg_est}, yield
\begin{equation*}
\|r^{(k+1)}\|_\infty
\le\eta\|h^{(k+1)}-h^{(k)}\|_\infty
=\eta\|a^{(k)}\|_\infty
=\eta\|\mathcal R_g r^{(k)}\|_\infty
\le\eta K_0\|r^{(k)}\|_\infty
\end{equation*}
for $k\in\mathbb N_0$.
Since $r^{(0)}=p$, from the last inequality and~\eqref{pn_est}, we obtain
\begin{equation}\label{rk_est}
\|r^{(k)}\|_\infty\le(\eta K_0)^k\|r^{(0)}\|_\infty=(\eta K_0)^k\|p\|_\infty
\le(\eta K_0)^k\delta,
\qquad k\in\mathbb N_0.
\end{equation}
Furthermore, by~\eqref{hk_step},~\eqref{ak_def},~\eqref{Rg_est} and~\eqref{rk_est},
\begin{equation}\label{hk_dif}
\|h^{(k+1)}-h^{(k)}\|_\infty
=\|a^{(k)}\|_\infty=\|\mathcal R_g r^{(k)}\|_\infty
\le K_0\|r^{(k)}\|_\infty
\le K_0(\eta K_0)^k\delta.
\end{equation}
Since $\eta K_0<1$, it follows that $(h^{(k)})_{k\in\mathbb N_0}$ is a Cauchy sequence in the Banach space $\ell_\infty(C_b(X))$. Thus, there exists $h=(h_n)_{n\in\mathbb Z}\in\ell_\infty(C_b(X))$ such that
$h^{(k)}\to h$. Moreover, by~\eqref{hk_dif},
\begin{equation*}
\|h\|_\infty
\le\sum_{k=0}^\infty
\|h^{(k+1)}-h^{(k)}\|_\infty
\le K_0\delta\sum_{k=0}^\infty(\eta K_0)^k
=\frac{K_0\delta}{1-\eta K_0}
=\epsilon,
\end{equation*}
where the last equality follows from~\eqref{delta_choice}. Hence
\begin{equation}\label{h_est}
\sup_{n\in\mathbb Z}\|h_n\|_\infty\le\epsilon.
\end{equation}
By~\eqref{rk_est}, $r^{(k)}\to0$, while $h^{(k)}\to h$ and~\eqref{N_lip} imply that $\mathcal N(h^{(k)})\to\mathcal N(h)$ in $\ell_\infty(C_b(X))$. Since the convergence in $\ell_\infty(C_b(X))$ is equivalent to uniform convergence in~$n\in\mathbb Z$ and $x\in X$, we have
\begin{equation*}
h_n^{(k)}(x)\rightarrow h_n(x)\qquad\text{and}\qquad h_{n+1}^{(k)}(g_n(x))\rightarrow h_{n+1}(g_n(x))\qquad\text{as $k\to\infty$}
\end{equation*}
for all $n\in\mathbb Z$ and $x\in X$. Moreover, by the continuity of~$T_n$,
\begin{equation*}
T_n h^{(k)}_n(x)\to T_n h_n(x)\qquad\text{as $k\to\infty$}
\end{equation*}
for all $n\in\mathbb Z$ and $x\in X$.
Therefore, passing to the limit in~\eqref{res_def_general}, pointwise for the fixed $n$ and $x$, yields
\begin{equation*}
0=p_n(x)+(\mathcal N(h))_n(x)
-h_{n+1}(g_n(x))+T_nh_n(x).
\end{equation*}
This, combined with~\eqref{N_def}, implies for $n\in\mathbb Z$ and $x\in X$,
\begin{equation}\label{cohom_eq_2}
h_{n+1}(g_n(x))-T_nh_n(x)
=p_n(x)+f_n(x+h_n(x))-f_n(x).
\end{equation}
Finally, define
\begin{equation}\label{Hn_def}
H_n:=I+h_n,
\qquad n\in\mathbb Z.
\end{equation}
Each $H_n$, $n\in\mathbb Z$, is continuous. Using~\eqref{cohom_eq_2}, \eqref{pndef}, and $F_n=T_n+f_n$, we obtain for $n\in\mathbb Z$ and $x\in X$,
\begin{equation*}
\begin{aligned}
H_{n+1}(g_n(x))
&=g_n(x)+h_{n+1}(g_n(x))\\
&=g_n(x)+T_nh_n(x)+p_n(x)
+f_n(x+h_n(x))-f_n(x)\\
&=g_n(x)+T_nh_n(x)+F_n(x)-g_n(x)
+f_n(x+h_n(x))-f_n(x)\\
&=T_nx+T_nh_n(x)+f_n(x+h_n(x))\\
&=F_n(x+h_n(x))\\
&=F_n(H_n(x)).
\end{aligned}
\end{equation*}
Hence
\begin{equation}\label{semiconj_final}
H_{n+1}\circ g_n=F_n\circ H_n,
\qquad n\in\mathbb Z.
\end{equation}
Moreover, by~\eqref{h_est},
\begin{equation*}
\sup_{n\in\mathbb Z}\|H_n-I\|_\infty
=\sup_{n\in\mathbb Z}\|h_n\|_\infty
\le\epsilon.
\end{equation*}
Since $(g_n)_{n\in\mathbb Z}$ was an arbitrary sequence in $Homeo(X)$ satisfying~\eqref{g_close}, it follows that $(F_n)_{n\in\mathbb Z}$ is topologically stable.
\end{proof}

\begin{proof}[Proof of Corollary~\ref{cor1}]
Let $\eta>0$ be given by Theorem~\ref{thm3}, and set
\begin{equation*}
M:=\sup_{n\in\mathbb Z}\|T_n^{-1}\|<\infty.
\end{equation*}
Choose
\begin{equation*}
0<\tilde\eta<\min\left\{\eta,\frac1M\right\}.
\end{equation*}
We show that $\tilde\eta$ has the required property.
Suppose that $(f_n)_{n\in\mathbb Z}$ is a sequence of Lipschitz self-maps of~$X$ such that
$\operatorname{Lip}(f_n)<\tilde\eta$ for all $n\in\mathbb Z$.
Then, for every $n\in\mathbb Z$,
\begin{equation*}
\|T_n^{-1}\|\operatorname{Lip}(f_n)
\le M\operatorname{Lip}(f_n)<M\tilde\eta<1.
\end{equation*}
Hence, by Remark~\ref{rem2},
\begin{equation*}
F_n:=T_n+f_n\in Homeo(X),\qquad n\in\mathbb Z.
\end{equation*}
Since $\operatorname{Lip}(f_n)<\tilde\eta<\eta$ for every $n\in\mathbb Z$, Theorem~\ref{thm3} implies that $(F_n)_{n\in\mathbb Z}$ has the shadowing property.
\end{proof}	

\begin{proof}[Proof of Corollary~\ref{cor2}]
The proof is identical to that of Corollary~\ref{cor1}. One chooses $\tilde\eta>0$ small enough so that Remark~\ref{rem2} guarantees that $F_n:=T_n+f_n\in Homeo(X)$ for all $n\in\mathbb Z$, and then applies Theorem~\ref{thm4} instead of Theorem~\ref{thm3}.
\end{proof}

\begin{proof}[Proof of Theorem~\ref{thm5}]
Let $(T_n)_{n\in\mathbb Z}$ be the constant sequence defined by
$T_n:=T$ for all $n\in\mathbb Z$.
The shadowing property of~$T$ implies that the constant sequence
$(T_n)_{n\in\mathbb Z}$ also has the shadowing property. Let
$\eta>0$ be given by Theorem~\ref{thm3}. Choose
\begin{equation*}
0<\tilde\eta<\min\left\{\eta,\frac1{\|T^{-1}\|}\right\}.
\end{equation*}
Let $f$ be a Lipschitz self-map of~$X$ such that
$\operatorname{Lip}(f)<\tilde\eta$.
Then
\begin{equation*}
\|T^{-1}\|\operatorname{Lip}(f)<1,
\end{equation*}
and hence, by Remark~\ref{rem2},
\begin{equation*}
F:=T+f\in Homeo(X).
\end{equation*}
Now consider the constant sequence
$f_n:=f$ for all $n\in\mathbb Z$.
Since
\begin{equation*}
\operatorname{Lip}(f_n)=\operatorname{Lip}(f)<\tilde\eta<\eta
\qquad\text{for all $n\in\mathbb Z$},
\end{equation*}
Theorem~\ref{thm3} implies that the constant sequence
$F_n:=T_n+f_n=F$, $n\in\mathbb Z$
has the shadowing property. By Definitions~\ref{autshad} and~\ref{nonautshad}, the shadowing property of~$F$ is equivalent to that of the associated constant sequence $(F_n)_{n\in\mathbb Z}$.
 Therefore, $F$ has the shadowing property.
\end{proof}

\begin{proof}[Proof of Theorem~\ref{thm6}]
By Theorem~\ref{thm2}, the topological stability of~$T\in GL(X)$ implies that $T$ is generalized pseudo-hyperbolic. Let $C\in H(X)$ and $D,\lambda>0$ be as in Definition~\ref{autgph}. Consider the constant sequence $T_n:=T$ for all $n\in\mathbb Z$.
As in the proof of Theorem~\ref{thm2}, this sequence is generalized pseudo-hyperbolic with $C_n:=C$ for all $n\in\mathbb Z$. Its Green family
\begin{equation*}
G_{m,n}=\begin{cases}
T^{m-n-1}C, & m\ge n+1,\\[1mm]
-T^{m-n-1}(I-C), & m<n+1,
\end{cases}
\end{equation*}
satisfies $G_{m,n}=G_{m-n,0}$ for all $m,n\in\mathbb Z$.
Let
\begin{equation*}
K_0:=D\sum_{j\in\mathbb Z}e^{-\lambda|j|}
=D\frac{1+e^{-\lambda}}{1-e^{-\lambda}}.
\end{equation*}
Choose
\begin{equation*}
0<\eta<\min\left\{\frac1{K_0},\frac1{\|T^{-1}\|}\right\}.
\end{equation*}
Let $f$ be a Lipschitz self-map of~$X$ satisfying $\operatorname{Lip}(f)<\eta$. Since
\begin{equation*}
\|T^{-1}\|\operatorname{Lip}(f)<1,
\end{equation*}
as noted in Remark~\ref{rem2}, $F:=T+f\in Homeo(X)$.
We prove that $F$ is topologically stable. Let $\epsilon>0$. Choose
\begin{equation*}
\delta:=\frac{1-\eta K_0}{K_0}\,\epsilon>0.
\end{equation*}
Let $g\in Homeo(X)$ satisfy $\|F-g\|_\infty\le\delta$. Consider the constant sequences
\begin{equation*}
f_n:=f\qquad\text{and}\qquad g_n:=g,\qquad n\in\mathbb Z.
\end{equation*}
We apply the construction from the proof of Theorem~\ref{thm4}. In the present autonomous setting,
\begin{equation*}
p_n:=F-g=:p,\qquad n\in\mathbb Z,
\end{equation*}
is independent of $n$, and the initial approximation $h^{(0)}=0$ is also independent of $n$.
We claim that, for every $k\in\mathbb N_0$, the sequence $h^{(k)}=(h^{(k)}_n)_{n\in\mathbb Z}$ 
constructed in the proof of Theorem~\ref{thm4} is independent of~$n$. We prove this by induction on $k$. The claim is obvious for $k=0$.
Suppose that, for some $k\in\mathbb N_0$, $h^{(k)}_n=h^{(k)}_0$ for all $n\in\mathbb Z$. Then, for every $n\in\mathbb Z$ and $x\in X$,
\begin{equation*}
(\mathcal N(h^{(k)}))_n(x)
=
f(x+h_0^{(k)}(x))-f(x),
\end{equation*}
and
\begin{equation*}
(\mathcal L_g h^{(k)})_n(x)
=
h_0^{(k)}(g(x))-T h_0^{(k)}(x).
\end{equation*}
Hence both $\mathcal N(h^{(k)})$ and $\mathcal L_g h^{(k)}$ are independent of $n$. Therefore,
\begin{equation*}
r^{(k)}
=
p+\mathcal N(h^{(k)})-\mathcal L_g h^{(k)}
\end{equation*}
is also independent of $n$.
Now let $q=(q_n)_{n\in\mathbb Z}\in\ell_\infty(C_b(X))$ be independent of $n$, so that $q_n=q_0$ for all $n\in\mathbb Z$. Using the definition of $\mathcal R_g$, the translation invariance of the Green family, and the relation $g(m,n)=g^{m-n}$, we have
\begin{equation*}
(\mathcal R_gq)_n(x)
=\sum_{m\in\mathbb Z}G_{n-m,0}\bigl(q_0(g^{m-n}(x))\bigr)
=\sum_{j\in\mathbb Z}G_{j,0}\bigl(q_0(g^{-j}(x))\bigr).
\end{equation*}
Thus, $\mathcal R_g q$ is independent of~$n$. Applying this to $q=r^{(k)}$, we conclude that $a^{(k)}:=\mathcal R_g r^{(k)}$
is independent of~$n$. Since $h^{(k+1)}=h^{(k)}+a^{(k)}$, it follows that $h^{(k+1)}$ is independent of~$n$. This proves the claim. 

By the proof of Theorem~\ref{thm4}, there exists $h=(h_n)_{n\in\mathbb Z}\in\ell_\infty(C_b(X))$ such that $h^{(k)}\to h$ in $\ell_\infty(C_b(X))$ and
\begin{equation*}
\sup_{n\in\mathbb Z}\|h_n\|_\infty\le\epsilon.
\end{equation*}
Since every $h^{(k)}$ is independent of~$n$ and $h^{(k)}\to h$ in $\ell_\infty(C_b(X))$,  the limit~$h$ is also independent of~$n$. Therefore, $h_n=h_0\in C_b(X)$ for all $n\in\mathbb Z$. Set $H:=I+h_0$.
The semiconjugacy relation~\eqref{semiconj_final} obtained in the proof of Theorem~\ref{thm4}
reduces to
\begin{equation*}
H\circ g=F\circ H.
\end{equation*}
Moreover,
\begin{equation*}
\|H-I\|_\infty=\|h_0\|_\infty\le\epsilon.
\end{equation*}
Since $g\in Homeo(X)$ with $\|F-g\|_\infty\le\delta$ was arbitrary, $F$ is topologically stable.
\end{proof}

	\section*{Acknowledgements}  
This paper has been funded by the European Union – NextGenerationEU-StatDin\-MatAn-uniri-iz-25-108 (Davor Dragi\v cevi\' c).


\begin{thebibliography}{99}

\bibitem{BD}
L. Backes and D. Dragi\v cevi\' c, \emph{Shadowing for infinite dimensional dynamics and exponential trichotomies}, Proc. Roy. Soc. Edinburgh Sect. A \textbf{151} (2021), 863--884.

\bibitem{B}
N. C. Bernardes Jr., \emph{On the stability of generalized hyperbolic operators}, Rev. Mat. Complut. (2026), DOI: 10.1007/s13163-026-00569-5.

\bibitem{BCDHP}
N. C. Bernardes Jr., B. M. Caraballo, U. B. Darji, V. V. F\'avaro and A. Peris, \emph{Generalized hyperbolicity, stability and expansivity for operators on locally convex spaces}, J. Funct. Anal. \textbf{288} (2025), no.~2, Article 110696.

\bibitem{BCDMP}
N. C. Bernardes Jr., P. R. Cirilo, U. B. Darji, A. Messaoudi and E. R. Pujals, \emph{Expansivity and shadowing in linear dynamics}, J. Math. Anal. Appl. \textbf{461} (2018), no.~1, 796--816.



\bibitem{BM}
N. C. Bernardes Jr. and A. Messaoudi, \emph{A generalized Grobman--Hartman theorem}, Proc. Amer. Math. Soc. \textbf{148} (2020), no.~10, 4351--4360.

\bibitem{BM2}
N. C. Bernardes Jr. and A. Messaoudi, \emph{Shadowing and structural stability for operators}, Ergodic Theory Dynam. Systems \textbf{41} (2021), no.~4, 961--980.

\bibitem{BP}
N. C. Bernardes Jr. and A. Peris, \emph{On shadowing and chain recurrence in linear dynamics}, Adv. Math. \textbf{441} (2024), Article 109539.

\bibitem{CGP}
P. R. Cirilo, B. Gollobit and E. R. Pujals, \emph{Dynamics of generalized hyperbolic linear operators}, Adv. Math. \textbf{387} (2021), Article 107830.

\bibitem{DM}
E. D'Aniello, U. B. Darji and M. Maiuriello, \emph{Generalized hyperbolicity and shadowing in $L^p$ spaces}, J. Differential Equations \textbf{298} (2021), 68--94.

\bibitem{DDM}
E. D'Aniello, U. B. Darji and M. Maiuriello, \emph{Shift-like operators on $L^p(X)$}, J. Math. Anal. Appl. \textbf{515} (2022), no.~1, Article 126393.

\bibitem{DP}
D. Dragi\v cevi\' c and M. Pituk, \emph{Duality between shadowing and uniform expansivity in linear dynamics}, Trans. Amer. Math. Soc. (2026), DOI: 10.1090/tran/9879.

\bibitem{H}
D. Henry, \emph{Geometric Theory of Semilinear Parabolic Equations}, Lecture Notes in Mathematics, vol.~840, Springer, Berlin, 1981.

\bibitem{HP}
E. Hille and R. S. Phillips, \emph{Functional Analysis and Semi-Groups}, American Mathematical Society, Providence, RI, 1957.

\bibitem{LMN}
K. Lee, C. A. Morales and N. Nguyen, \emph{Topologically stable linear operators}, Bull. Soc. Math. France \textbf{151} (2023), no.~4, 647--657.

\bibitem{MTS}
A. Messaoudi, J. Tofanin Neto, M. Saavedra and I. Tsokanos, \emph{On generalized hyperbolicity, stability, and shadowing for linear operators}, arXiv:2608.17021, 2026. \url{https://arxiv.org/abs/2608.17021}

\bibitem{Mul}
V. M\"uller, \emph{Spectral Theory of Linear Operators and Spectral Systems in Banach Algebras}, second edition, Birkh\"auser, Basel, 2007.

\bibitem{N}
N. Nguyen, \emph{Stability of limit expansive systems},
J. Math. Anal. Appl. \textbf{547} (2025), Article 129335.

\bibitem{PS}
M. Pituk, \emph{Spectral characterization of shadowing for linear operators on Hilbert spaces}, Math. Ann., in press, arXiv:2511.12272, 2025.
 \url{https://arxiv.org/abs/2511.12272}

\bibitem{P}
M. Pituk, \emph{Resolving the generalized hyperbolicity conjecture for shadowing}, arXiv:2608.19499, 2026. 
\url{https://arxiv.org/abs/2608.19499}

\end{thebibliography}
\end{document}